\theoremstyle{plain}
\newtheorem{prop}{Proposition}[section]
\newtheorem{coro}[prop]{Corollary}
\newtheorem{conj}[prop]{Conjecture}
\newtheorem{lemm}[prop]{Lemma}
\newtheorem{ques}[prop]{Question}
\newtheorem{thm}[prop]{Theorem}
\theoremstyle{definition}
\newtheorem{defn}[prop]{Definition}
\newtheorem{rem}[prop]{Remark}
\DeclareMathOperator{\breadth}{breadth}
\DeclareMathOperator{\Ker}{Ker}
\DeclareMathOperator{\Image}{Image}
\DeclareMathOperator{\maxdeg}{maxdeg}
\DeclareMathOperator{\mindeg}{mindeg}
\DeclareMathOperator{\lk}{lk}
\DeclareMathOperator{\Kh}{Kh}
\DeclareMathOperator{\Le}{Le}
\def\mcg#1;#2{\Gamma_{#1,#2}}
\def\fg#1;#2{\Pi_{#1,#2}}
\def\tb#1;#2{\mathscr{K}_{\frac{#1}{#2}}}
\begin{document}

\title[On Khovanov Homology of Quasi-Alternating Links]
{On Khovanov Homology of Quasi-Alternating Links}

\keywords{quasi-alternating, Khovanov homology, Jones polynomial, breadth}

\author{Khaled Qazaqzeh}
%\address{Permnant Adddress:- Department of Mathematics, Faculty of Science,
%Yarmouk University, Irbed, Jordan, 21163}
%\email{qazaqzeh@yu.edu.jo}
\address{Address:- Department of Mathematics, Faculty of Science,  Kuwait
University
P. O. Box 5969 Safat-13060, Kuwait, State of Kuwait}

\email{khaled.qazaqzeh@ku.edu.kw}

\author{Nafaa Chbili}
\address{Department of Mathematical Sciences\\ College of Science\\ UAE University
\\ 15551 Al Ain, U.A.E.}
\email{nafaachbili@uaeu.ac.ae}
\urladdr{http://faculty.uaeu.ac.ae/nafaachbili}

\date{15/03/2021}

\begin{abstract}
We prove that the length of any gap in the differential grading of the Khovanov homology of any
quasi-alternating link is one. As a consequence, we obtain that the length of any gap in the
Jones polynomial of any such link is one. This establishes a weaker version of
Conjecture 2.3 in \cite{CQ}. Moreover, we obtain a lower bound for the determinant of any such link
in terms of the breadth of its Jones polynomial. This establishes a weaker
version of Conjecture 3.8 in \cite{QC}. The main tool in obtaining this result is
establishing the Knight Move Conjecture \cite[Conjecture\,1]{Ba} for the class of quasi-alternating links.

\end{abstract}

\maketitle

\section{introduction}

The class of alternating links has played an important role in the development
of knot theory since its early age. In particular, the study of their Jones polynomials led to the solution of long-lasting conjectures in knot theory.
Thistletwaite \cite{Th} proved that the Jones polynomial of any prime alternating link, which is not a  $(2,p)$-torus link,
is alternating and has no gaps. Moreover, the  breadth  of the Jones polynomial of any connected alternating link is equal to  its crossing number. Alternating links are also known to have simple Khovanov and link Floer  homologies.
Indeed, the Khovanov homology of a given alternating link $L$ is entirely  determined
by its  signature $\sigma_L$ and its  Jones polynomial $V_L(t)$.  Similarly, the  link Floer  homology  of any alternating link $L$ is entirely  determined
by its  signature $\sigma_L$ and its  Alexander  polynomial $\Delta_L(t)$. Furthermore, Ozsv$\acute{a}$th and
Szab$\acute{o}$ \cite{OS} studied the Heegaard Floer Homology  of the  branched double-cover of alternating links and proved that this homology is  determined by the determinant of the link, $\rm{det}(L)$.
This  homological property    extends to  a larger family of links called
quasi-alternating.  Unlike alternating links which admit   a simple diagrammatic definition, quasi-alternating links are  defined recursively as follows:
\begin{defn}\label{def}
The set $\mathcal{Q}$ of quasi-alternating links is the smallest set
satisfying the following properties:
\begin{itemize}
	\item The unknot belongs to $\mathcal{Q}$.
  \item If $L$ is a link with a diagram $D$ containing a crossing $c$ such that
\begin{enumerate}
\item both smoothings of the diagram $D$ at the crossing $c$, $L_{0}$ and $L_{1}$ as
in Figure \ref{figure} belong to $\mathcal{Q}$, and
\item $\det(L_{0}), \det(L_{1}) \geq 1$,
\item $\det(L) = \det(L_{0}) + \det(L_{1})$; then $L$ is in $\mathcal{Q}$ and in this
case we say $L$ is quasi-alternating at the crossing $c$ with quasi-alternating
diagram $D$.
\end{enumerate}
\end{itemize}
\end{defn}

\begin{figure} [h]
  % Requires \usepackage{graphicx}
\begin{center}
\includegraphics[scale=0.4]{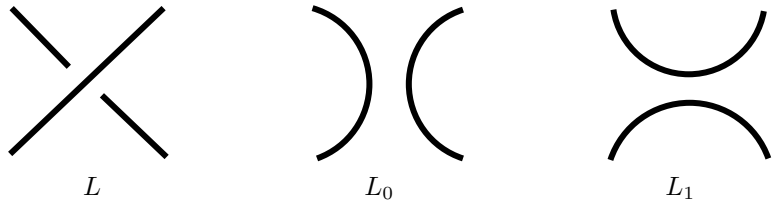} \\
{$L$}\hspace{3.5cm}{$L_0$}\hspace{3.6cm}$L_{1}$
\end{center}
%\vspace{-3.5cm}
\caption{The link diagram $L$ at the crossing $c$ and its smoothings $L_{0}$ and
$L_{1}$ respectively.}\label{figure}
\end{figure}
It is impossible to use the above definition to show that a given link is not quasi-alternating. As an alternative, several obstructions for a link to be quasi-alternating have been introduced through
the past two decades. Many of these  have been established  for alternating links first, then extended  to this new class of links. Some of these  main  obstructions  are listed here.

\begin{enumerate}
\item the branched double-cover of any quasi-alternating link
is an $L$-space \cite[Proposition.\,3.3]{OS};
\item the space of branched double-cover of any
quasi-alternating link bounds a negative definite $4$-manifold $W$ with
$H_{1}(W) = 0$ \cite[Proof of Lemma.\,3.6]{OS};
\item the $\mathbb{Z}/2\mathbb{Z}$ knot Floer homology group of any
quasi-alternating link is $\sigma$-thin \cite[Theorem.\,2]{MO};
\item the reduced ordinary Khovanov homology group of any
quasi-alternating link is $\sigma$-thin \cite[Theorem.\,1]{MO};
 \item the reduced odd Khovanov homology group of any
quasi-alternating link is $\sigma$-thin \cite[Remark after Proposition.\,5.2]{ORS};
 \item the determinant of any quasi-alternating link is bigger  than  the degree of
its $Q$-polynomial \cite[Theorem\,2.2]{QC}. This inequality was  sharpened later to the determinant
minus one is bigger  than or equal to the degree of the $Q$-polynomial with
equality holds only for $(2,n)$-torus links \cite[Theorem\,1.1]{T}.
\end{enumerate}

The main purpose of this paper is to introduce a new and simple obstruction for a link to be quasi-alternating in terms of Khovanov homology.   More precisely, we prove  that if the differential grading of the
Khovanov homology of any quasi-alternating link  has a gap, then the length of this gap
is one. As an immediate application, we obtain that any gap in the Jones polynomial of any
quasi-alternating link has length one. Moreover, a lower bound for the determinant of any such
link is obtained in terms of  the breadth of its  Jones polynomial. The main result of this paper totally relies on establishing the Knight Move Conjecture  \cite[Conjecture\,1]{Ba} for the class of quasi-alternating links.

Consequently, we prove that certain links are not quasi-alternating as the differential grading of their Khovanov homology has a gap of length bigger than one. On the other hand, we show that some subclasses of quasi-alternating links have no
gaps in the differential grading of their Khovanov homology. This leads us to suggest
Conjecture \ref{conjmain} that implies both Conjecture 2.3 in \cite{CQ} and Conjecture
3.8 in \cite{QC}.

This paper is organized as follows. In Section 2, we introduce  some background and notations needed for the rest of the paper. In Section 3, we give the proof of our main result. Finally, Section 4 will be devoted to give some applications and consequences of our main result.

 %%%%%%%%%%%%%%%%%%%%%%%%%%%%%%%%%%%%%%%%%%%%%%%%%%%%%%%%%%%%%%
 %%%%%%%%%%%%%%%%%%%%%%%%%%%%%%%%%%%%%%%%%%%%%%%%%%%%%%%%%%%%%%
\section{Background and Notations}
In this section, we introduce some notations  and definitions,  and we  review some properties of Khovanov homology  needed in the this paper. Without loss of generality,
we assume that the three unoriented links $L, L_{0}$ and $L_{1}$ are given according to
the scheme in Figure \ref{figure}. The crossing $c$ of the link $L$ will be called of type I if it looks like the crossing in Figure \ref{figure}, otherwise it will be of type II. For an oriented link $L$, we always assume that we smooth a positive crossing according to the scheme in Figure \ref{Diagram1}.

All the results of this paper discuss the case where the crossing $c$ is of type I and oriented positively unless mentioned otherwise.  Similar results can be obtained in the other cases by taking the mirror image of $L$ if it is required.

\subsection{The Jones polynomial}
The Jones polynomial $V_L(t)$ is an invariant of oriented links. It is a Laurent
polynomial with integral coefficients that can be defined in several ways. In this
subsection, we shall briefly recall the definition of  this polynomial in terms of the Kauffman
bracket  and review some of its properties needed in the sequel.

\begin{defn}
The Kauffman bracket polynomial is a function from the set of unoriented link
diagrams in the oriented plane to the ring of Laurent polynomials with integer
coefficients in an indeterminate $A$. It maps a link $L$ to $\left\langle L
\right\rangle\in \mathbb Z[A^{-1},A]$ and it is defined by the following relations:
\begin{enumerate}
\item $\left\langle \bigcirc \right\rangle=1$,
\item $\left\langle \bigcirc \cup L\right\rangle=(-A^{-2}-A^2)\left\langle L
\right\rangle$,
\item $\left\langle L\right\rangle=A\left\langle L_0\right\rangle+A^{-1}\left\langle
L_1\right\rangle$,
\end{enumerate}
where $\bigcirc$, in relation 2 above,  denotes a trivial circle  disjoint from the rest of the link, and  $L,L_0, \text{and } L_1$  represent three unoriented links
which are identical everywhere  except in a small region where they are as indicated in  Figure \ref{figure}.
\end{defn}
Given an oriented link diagram  $L$, we let $x(L)$ to denote the number of negative crossings and  $y(L)$ to denote the number of positive crossings in $L$ according to the scheme in Figure \ref{Diagram1}.
The writhe of link diagram $L$ is defined to be the integer $w(L) = y(L) - x(L)$.
\begin{defn}
The Jones polynomial $V_{L}(t)$ of an oriented link $L$ is the Laurent polynomial
in $t^{\pm 1/2}$ with integer coefficients defined by
\begin{equation*}
V_{L}(t)=((-A)^{-3w(L)}\left\langle L \right\rangle)_{t^{1/2} = A^{-2}}\in
\mathbb Z[t^{-1/2},t^{1/2}],
\end{equation*}
where $\left\langle L \right\rangle $ is the Kauffman bracket of the unoriented link obtained from $L$ by ignoring  the orientation.
\end{defn}

\begin{figure}[h]
	\centering
		\reflectbox{\includegraphics[scale=0.11]{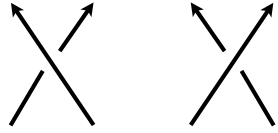}}\hspace{1.1cm}
		\includegraphics[scale=0.11]{Diagram1}
	\caption{Negative and positive crossings respectively.}
	\label{Diagram1}
\end{figure}

\subsection{Khovanov Invariant}
This link invariant is a bigraded cohomology theory, with rational coefficients, that was first introduced
by Khovanov in \cite{Kh1}.
For an oriented link $L$, we denote the  $i$-{th} homology group of the complex
$\overline{\mathcal{C}}(L)$ and $\mathcal{C}(L)$ by $\overline{\mathcal{H}}^{i}(L)$
and $\mathcal{H}^{i}(L)$, respectively. Also, we denote the $j$-th graded component
of $\overline{\mathcal{H}}^{i}(L)$, and $\mathcal{H}^{i}(L)$ by
$\overline{\mathcal{H}}^{i,j}(L)$, and $\mathcal{H}^{i,j}(L)$, respectively.
Therefore, we have
\[
\overline{\mathcal{H}}^{i}(L) = \oplus_{j\in \mathbb{Z}}
\overline{\mathcal{H}}^{i,j}(L), \ \text{and} \
\mathcal{H}^{i}(L) = \oplus_{j\in \mathbb{Z}}\mathcal{H}^{i,j}(L).
\]

The homology groups $\overline{\mathcal{H}}(L)$ and $\mathcal{H}(L)$ are isomorphic  up to some
shifts. More precisely

\begin{equation}\label{relation}
\mathcal{H}^{i, j}(L) = \overline{\mathcal{H}}^{i + x(L), j + 2x(L) - y(L)}(L),
\end{equation}
where $x(L)$ and $y(L)$ are as defined  above.
The graded Euler characteristic of this invariant is equal to the normalized
Jones polynomial. More precisely,
\begin{small}
\begin{equation}\label{mainequation}
\sum_{i,j\in\mathbb{Z}}(-1)^{i}q^{j}\dim\mathcal{H}^{i,j}(L) = (q^{-1} + q)V(L)_{t^{1/2} = -q}.
\end{equation}
\end{small}

It is worth mentioning here  that the  graded Euler characteristic is indeed  the evaluation, at $t=-1$, of the  two-variable
Khovanov polynomial  invariant of the link  defined first in \cite{Ba} as follows:
\begin{equation}\label{polynomial}
\Kh(L)(t,q) = \sum_{i,j\in\mathbb{Z}}t^{i}q^{j}\dim\mathcal{H}^{i,j}(L).
\end{equation}

\subsection{Exact Sequence}

It is clear that $\mathcal{\overline{C}}(L_{0})$ and
$\mathcal{\overline{C}}(L_{1})[-1]\{-1\}$ are subcomplexes of
$\mathcal{\overline{C}}(L)$ and form  a short exact sequence
\[
0 \rightarrow \mathcal{\overline{C}}(L_{1})[-1]\{-1\} \rightarrow
\mathcal{\overline{C}}(L) \rightarrow \mathcal{\overline{C}}(L_{0}) \rightarrow 0,
\]
with degree preserving maps. This induces the following exact sequences:

\begin{small}
\begin{equation}\label{mainsequence}
\cdots\rightarrow \overline{\mathcal{H}}^{i-1,j}(L_{0}) \stackrel{\delta}{\rightarrow}
\overline{\mathcal{H}}^{i,j}(L_{1})[-1]\{-1\}\rightarrow \overline{\mathcal{H}}^{i,j}(L)
\rightarrow \overline{\mathcal{H}}^{i,j}(L_{0})\stackrel{\delta}{\rightarrow}
\overline{\mathcal{H}}^{i+1,j}(L_{1})[-1]\{-1\}\rightarrow \cdots
\end{equation}
\end{small}

Consequently, we obtain the following long exact sequences in  homology as a result of
the facts that $x(L_{0}) = x(L), y(L_{0}) = y(L) - 1, x(L_{1}) = x(L) + e$ and
 $y(L_{1}) = y(L)-e-1$:

\begin{equation}\label{positive}
\cdots\rightarrow \mathcal{H}^{i-1,j-1}(L_{0}) \stackrel{\delta}{\rightarrow}
\mathcal{H}^{i-e-1,j-3e-2}(L_{1})\rightarrow \mathcal{H}^{i,j}(L)\rightarrow
\mathcal{H}^{i,j-1}(L_{0})\stackrel{\delta}{\rightarrow}
\mathcal{H}^{i-e,j-3e-2}(L_{1})\rightarrow \cdots,
\end{equation}
where $e$ denotes the difference between the number of negative crossings in
$L_{1}$ and the number of such crossings in $L$.

We recall from \cite{Kh2} that a link $L$ is said to be  \begin{sl} $H$-thin \end{sl}   if its
Khovanov homology $\mathcal{H}(L)$ is supported on the two diagonals that satisfy
$j -2i = -\sigma(L) \pm 1$. It has been proved in \cite{L} that all alternating
links are $H$-thin. This fact has been later generalized to quasi-alternating
links in \cite[Theorem\,1]{MO}.

\begin{rem}\label{thin}
The result of \cite[Theorem\,1]{MO} shows   that the reduced Khovanov
homology of any quasi-alternating link is $\sigma$-thin over $\mathbb{Z}$. Indeed, this
 implies that the unreduced rational Khovanov homology  that we consider in this paper
is $H$-thin.
\end{rem}

\subsection{Lee Invariant}
The Lee invariant is a variant of the rational Khovanov homology
obtained from the same underlying complex with a  differential $\Phi$ of degree
difference $(1,4)$. We let $H^{i}(L)$ to denote the $i$-th homology
group of the complex $\mathcal{C}(D)$ using Lee's differential. We summarize all
results about this invariant needed in the sequel in the following
proposition:
\begin{prop}\cite[Section\,4]{L}\label{lee}
Let $L$ be an oriented link with $k$ components $L_{1},L_{2}, \ldots
,L_{k}$. Then
\begin{enumerate}
\item $\dim(H(L)) = 2^{k}$.
\item There is a generator of homology in degree $i$ such that

\[ \dim H^{i}(L) = 2 \times \left|\{ E\subset \{2,3,\ldots,n\} | \left(\sum_{l\in E, m\notin E}\lk(L_{l},L_{m}) =i\right)\}\right|
\] where the set $ \{1, 2, \ldots , n\}$ indexes the set of components
of the link $L$. The linking number $\lk(L_{l},L_{m})$ is the linking number (for the
original orientation) between the components $L_{l}$ and $L_{m}$.
\item For any $H$-thin link $L$, we have a spectral sequence converging to $H(L)$
with
$E_{1}^{s,t} = \mathcal{H}^{s+t,4s}(L)$ and $E_{2} = H(L)$. In other words, \[
H(L) \cong \frac{\Ker (\Phi:  \mathcal{H}(L) \rightarrow \mathcal{H}(L))}
{\Image (\Phi: \mathcal{H}(L) \rightarrow \mathcal{H}(L))}.
\]
\end{enumerate}
\end{prop}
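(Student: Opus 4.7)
The plan is to follow Lee's deformation of the Khovanov complex: replace the underlying Frobenius algebra $V = \mathbb{Q}[X]/(X^2)$ with $\mathbb{Q}[X]/(X^2-1)$, which yields a deformed differential $d_{\Kh}+\Phi$ in which $\Phi$ raises the bigrading by $(1,4)$. The chain groups are unchanged, so the $q$-grading provides a filtration whose associated graded recovers the original Khovanov differential. I would carry out the proof in three stages matching the three claims.

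For the first stage, I would diagonalize the deformed multiplication via the change of basis $\mathbf{a} = 1+X$, $\mathbf{b} = 1-X$, under which $\mathbf{a}\cdot\mathbf{a}=2\mathbf{a}$, $\mathbf{b}\cdot\mathbf{b}=2\mathbf{b}$, and $\mathbf{a}\cdot\mathbf{b}=0$. For each orientation $o$ of $L$, labeling every Seifert circle of the oriented resolution by $\mathbf{a}$ or $\mathbf{b}$ according to its planar orientation produces a closed, non-exact generator $\mathfrak{s}_o$. Checking that these $2^k$ generators (one per orientation) are linearly independent and span $H(L)$ then yields $\dim H(L) = 2^k$.

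For the second stage, I would compute the homological degree of $\mathfrak{s}_o$. Write $o$ as the reversal of the original orientation on a subset of components indexed by $E\subset\{1,\ldots,k\}$. The Seifert resolution for $(L,o)$ differs from that for the original orientation exactly at the crossings between a component in $E$ and one in $E^c$, each of which switches sign under the reversal. Tallying how many of these crossings get resolved differently, weighed against the normalization shifts intrinsic to the grading convention, yields the homological degree $2\sum_{l\in E,\, m\notin E}\lk(L_l,L_m)$. Pairing $E$ with its complement $E^c$ identifies $\mathfrak{s}_o$ with $\mathfrak{s}_{\bar o}$ at the same degree (the sum being symmetric in $(E,E^c)$), which accounts for the overall factor of $2$ and for the restriction to $E\subset\{2,\ldots,k\}$ in the stated formula. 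The careful bookkeeping of resolution crossings versus grading shifts is the main technical obstacle of the argument.

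For the third stage, I would exploit the $q$-filtration on the Lee complex. Since the associated graded is the Khovanov complex, one obtains $E_1 = \mathcal{H}(L)$ with $d_1$ induced by $\Phi$; the bigrading $(1,4)$ of $\Phi$ matches the spectral-sequence indexing $E_1^{s,t}=\mathcal{H}^{s+t,4s}(L)$. For an $H$-thin link, Khovanov homology is supported on the two diagonals $j-2i=-\sigma(L)\pm 1$, so the higher differentials $d_r$ for $r\ge 2$ have targets lying off these diagonals and must vanish, forcing the spectral sequence to collapse at $E_2$. This gives the identification $H(L)\cong \Ker\Phi/\Image\Phi$ asserted in the proposition.
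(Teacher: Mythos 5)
The paper does not supply a proof of this proposition; it is cited directly from Lee \cite[Section~4]{L}, so there is no in-text argument to compare against. Your sketch faithfully reconstructs Lee's original approach: the diagonalizing basis $\mathbf{a}=1+X$, $\mathbf{b}=1-X$; the closed non-exact cycle $\mathfrak{s}_o$ attached to each of the $2^k$ orientations $o$ of $L$; the linking-number computation of its homological degree; and the $q$-filtration spectral sequence with $E_1\cong\mathcal{H}(L)$. Your collapse argument for Part~3 is correct as stated: $d_r$ shifts $j-2i$ by $4r-2$, which for $r\ge 2$ exceeds the spacing $2$ between the two diagonals $j-2i=-\sigma(L)\pm 1$ on which $\mathcal{H}(L)$ is supported, forcing $d_r=0$. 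The one substantive elision is in stage one: showing that the $2^k$ classes $[\mathfrak{s}_o]$ actually \emph{span} $H(L)$ is the hard half of Lee's theorem (linear independence of the $\mathfrak{s}_o$ at the chain level being comparatively routine), and it requires Lee's inductive argument through the filtration; the phrase ``checking that these generators\ldots span'' folds that crux away. You do, however, explicitly flag the sign-and-shift bookkeeping in stage two as the main technical obstacle, which is a fair reading of Lee's Section~4.
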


Our main goal in this paper is to prove the following theorem and give some consequences in the last section.

\begin{thm}\label{main}
The length of any gap in the differential grading of the Khovanov homology of any quasi-alternating
link is one.
\end{thm}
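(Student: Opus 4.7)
The plan is to proceed by strong induction on $\det(L)$, proving simultaneously the stronger statement that every quasi-alternating link satisfies the Knight Move Conjecture \cite[Conjecture~1]{Ba}; the gap-length assertion then follows as a combinatorial corollary. The base case $\det(L)=1$ (the unknot) is immediate, since $\mathcal{H}$ is concentrated in homological degree $0$. For the inductive step, let $L$ be quasi-alternating at a crossing $c$ with smoothings $L_0,L_1$. Definition~\ref{def} forces $\det(L_0),\det(L_1)<\det(L)$, and each $L_i$ is quasi-alternating, so the inductive hypothesis applies to them, and in particular Remark~\ref{thin} applies to all three of $L,L_0,L_1$.

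The key tools are the long exact sequence~(\ref{positive}), $H$-thinness, and the Lee spectral sequence of Proposition~\ref{lee}(3). Thinness forces $\mathcal{H}(L),\mathcal{H}(L_0),\mathcal{H}(L_1)$ each to live on a pair of adjacent diagonals, and on such links the Lee differential $\Phi$ takes the lower diagonal at homological degree $i$ to the upper diagonal at degree $i+1$ and vanishes on the upper diagonal, so a knight move pair is precisely a rank-one block of $\Phi$. The Knight Move Conjecture is therefore equivalent to $\Phi$ having the maximum rank compatible with $\dim H(L)=2^{\#\mathrm{components}(L)}$ from Proposition~\ref{lee}(1). Since the maps in~(\ref{positive}) are induced by saddle cobordisms, they commute with $\Phi$; a diagram chase using the inductive knight-move decompositions of $\mathcal{H}(L_0)$ and $\mathcal{H}(L_1)$, combined with this dimension count, then forces $\Phi$ on $\mathcal{H}(L)$ to be of maximal rank as well, yielding the Knight Move Conjecture for $L$.

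Once Knight Move holds, the homological support of $\mathcal{H}(L)$ decomposes as the union of consecutive pairs $\{i,i+1\}$ contributed by the individual knight moves, together with a finite set of exceptional degrees prescribed by the linking-number formula of Proposition~\ref{lee}(2). Comparing this support against the Jones polynomial via~(\ref{mainequation}), together with the known sign pattern of Jones coefficients for $H$-thin links, shows that consecutive populated homological degrees never differ by more than two, which is exactly the desired gap-length-one statement.

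The main obstacle is the middle paragraph: verifying that the connecting homomorphism $\delta$ in~(\ref{positive}) is compatible with the knight move pairing. Concretely, one must check that $\delta$ sends lower-diagonal generators of $L_0$ into lower-diagonal generators of $L_1$ (with the appropriate homological shift) so that knight move partners in $\mathcal{H}(L_0)$ and $\mathcal{H}(L_1)$ glue consistently through the mapping cone structure on $\overline{\mathcal{C}}(L)$, thereby ruling out the possibility that $\delta$ produces additional Lee-cycles which would decrease the rank of $\Phi$ and spoil the inductive propagation of the knight move structure.
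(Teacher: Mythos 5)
Your first two paragraphs track the paper reasonably well: the paper does indeed prove the Knight Move Conjecture for quasi-alternating links (Theorem~\ref{grading} and the subsequent Corollary), by induction on the determinant using the long exact sequence in Lee homology. Your proposed route—compatibility of the maps in~(\ref{positive}) with $\Phi$ plus a diagram chase—is a different mechanism from the paper's (the paper works directly in the Lee long exact sequence~(\ref{first})--(\ref{second}) and does a four-case analysis on nonvanishing in $L_0$, $L_1$), but it is in the same spirit, and you honestly flag the technical point that needs verifying.

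The genuine gap is in the third paragraph, and it is fatal to the plan. The Knight Move Conjecture constrains only the \emph{local} structure of $\mathcal{H}(L)$: every non-Lee generator is paired with a partner one step up in homological degree. It says nothing at all about the \emph{global} arrangement of those pairs along the homological axis. Nothing in the knight-move decomposition forbids, say, a knight pair in degrees $\{i,i+1\}$, then a gap of length five, then another pair in $\{i+7,i+8\}$. The Lee generators sit at degrees given by sums of linking numbers, which also gives no bound on the gap. So the claim ``consecutive populated homological degrees never differ by more than two'' does not follow from Knight Move plus~(\ref{mainequation}) plus alternation of signs. Those facts only give the \emph{equivalence} between a length-$s$ gap in the differential grading and a length-$s$ gap in $V_L(t)$ (this is precisely Proposition~\ref{breadth2case1}); they do not bound $s$.

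To bound $s$, the paper runs a separate induction on $\det(L)$ and passes through the Kauffman bracket skein relation $\langle L\rangle = A\langle L_0\rangle + A^{-1}\langle L_1\rangle$. Because $L$ is quasi-alternating there is no cancellation between the two summands, so any gap in $\langle L\rangle$ comes either from a gap inside $A\langle L_0\rangle$ or $A^{-1}\langle L_1\rangle$ (length at most seven by the inductive hypothesis translated through Proposition~\ref{breadth2case1}) or from the gap \emph{between} the two polynomials. Bounding that last gap by seven is the content of Proposition~\ref{basic}, Corollaries~\ref{leehomology} and~\ref{resultofbasic1} (which use the Lee dimension count $2^l = 2^{l_0}+2^{l_1}$ to force the crossing to join two components), and Proposition~\ref{new} (an induction on crossing switches between the two strands at $c$). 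None of this machinery appears in your proposal, and without it the step ``Knight Move $\Rightarrow$ gap length one'' is unjustified. You would need to add the entire Kauffman-bracket-gap analysis, or find a genuinely different mechanism that bounds the separation between the supports of $\mathcal{H}(L_0)$ and $\mathcal{H}(L_1)$ after shifting.
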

\section{Proof of the Main Theorem}

The crucial step in the proof of Theorem \ref{main} relies on showing that the Lee homology of any quasi-alternating link
is bi-graded and therefore the Khovanov homology of such a link satisfies the Knight Move Conjecture  \cite[Conjecture\,1]{Ba}.

A simple  way to describe the Khovanov homology of a given  link is to locate a non negative
integer $n$ at the point $(i,j)$, where $n$ represents the dimension of
$\mathcal{H}^{i,j}(L)$ over $\mathbb{Q}$. An example is illustrated in Table \ref{Khtrefoil}.
Based on the this description of Khovanov homology, we introduce the following definition.
\begin{defn}
For a given link $L$,
\begin{enumerate}
\item We say that the differential grading of the Khovanov homology of $L$ has a gap of length
$s$ if there are $s$ consecutive columns of the Khovanov homology of the link $L$
with zero entries and these columns correspond to vertical lines of $x$-intercepts $i,i+1,i+2,\ldots,i+s-1$
such that $i_{\min} +1 \leq  i< i+1< i+2 <\ldots<i+s-1 \leq i_{\max} -1$. This is
equivalent to say that $\mathcal{H}^{i-1}(L)\neq 0$ and $\mathcal{H}^{i+s}(L)\neq 0$ while
$\mathcal{H}^{m}(L) = 0$ for $m=i,i+1,\ldots,i+s-1$.
\item We say that the quantum grading of the Khovanov homology of $L$ has a gap of length
$s$ if there are $s$ consecutive rows of the Khovanov homology of the link $L$
with zero entries and these rows correspond to horizontal lines of $y$-intercepts
$j,j+2,j+4,\ldots,j+2s-2$ such that
$j_{\min} +2 \leq  j< j+2< j+4 <\ldots<j+2s-2 \leq j_{\max} -2$.
\item We say that the the Jones polynomial of the link $L$ has a gap of length $s$
if this polynomial has two monomials $t^{i}$ and $t^{i+s+1}$ of nonzero coefficients
and all the monomials $t^{m}$ have zero coefficients for $m=i+1,i+2,\ldots,i+s$.
The gap in the Kauffman bracket polynomial of the link $L$ can be defined in
a similar manner.
\item We say that there is a gap between the two polynomials $f(A)$ and $g(A)$
of length $s$ if either $M = \maxdeg(f(A)) < m =\mindeg(g(A)) $ or
$M = \maxdeg(g(A)) < m = \mindeg(f(A))$ and all the monomials
$A^{M+1},A^{M+2},\ldots, A^{M+s}=A^{m-1}$ have zero coefficients in the
two polynomials.
\end{enumerate}
\end{defn}

To illustrate this definition, we consider  the Khovanov homology of the trefoil knot $3_1 $ given in Table \ref{Khtrefoil}. This homology  has a gap of length one in the differential grading as $\mathcal{H}^{-1}(3_1)=0$
while both $\mathcal{H}^{-2}(3_1)$ and $\mathcal{H}^{0}(3_1)$ are nontrivial. Also, the Jone polynomial of the trefoil that is known to be $-t^{-4} +t^{-3}+t^{-1}$ has a gap of length one as the monomial $t^{-2}$ has zero coefficient while the monomials $t^{-1}$ and $t^{-3}$ have nonzero coefficients.

\begin{small}
\begin{table}
\begin{tabular}{|c||c|c|c|c|}
\hline
\diagbox{$j$}{$i$} &-3&-2&-1&0\\
\hline
-1&&&&1\\
\hline
-3&&&&1\\
\hline
-5&&1&&\\
\hline
-7&&&&\\
\hline
-9&1&&&\\
\hline
\end{tabular}
\vspace{0.2cm}
\caption{\label{Khtrefoil} The Khovanov homology of the trefoil knot.}
\end{table}
\end{small}

The signatures of the links $L, L_{0}$ and $L_{1}$ are related according to the following lemma if the link $L$ is quasi-alternating at the crossing $c$.
\begin{lemm}\label{signature}\cite[Lemma\,3]{MO}
Let $L$ be a quasi-alternating link and $L_{0}$ and $L_{1}$ be its smoothings at the crossing $c$
where it is quasi-alternating. For the triple $(L, L_0,L_1)$, we have
\[
\sigma(L_{0}) = \sigma(L) + 1
\]
and
\[
\sigma(L_{1}) = \sigma(L) + e,
\]
where $e$ is as defined in  Section 2.
\end{lemm}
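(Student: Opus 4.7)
The plan is to derive the lemma from three classical ingredients: Murasugi's inequalities for how the signature changes under the smoothings of a positive crossing, the sign-determinant formula relating $\sigma(L)$ to the sign of $\det(L)$, and the additivity hypothesis $\det(L) = \det(L_0) + \det(L_1)$ with both terms positive. No new induction on $\det(L)$ is really needed; the three ingredients pin down the signature values uniquely.

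First I would recall the a priori bounds. Since $c$ is a positive crossing of type I, $L_0$ is obtained by the oriented resolution, so Murasugi's classical inequalities give $\sigma(L_0) - \sigma(L) \in \{0, 1\}$. The unoriented resolution $L_1$ must be reoriented, and a standard computation yields $\sigma(L_1) - \sigma(L) \in \{e-1,\, e,\, e+1\}$, where $e$ is the number defined in Section 2. This reduces the problem to excluding the wrong values of the signature differences in the quasi-alternating case.

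Next I would use the sign-determinant convention: for any link $M$ with $\det(M) \neq 0$, the quantity $i^{\,\mu(M) - 1 - \sigma(M)} \det(M)$ is a positive real integer, where $\mu(M)$ is the number of components. Applied to the skein triple $(L, L_0, L_1)$, the Conway relation for the Alexander polynomial evaluated at $-1$ gives $\det(L) = \det(L_0) \pm \det(L_1)$, with the sign governed by the parities of $\sigma(L_0) - \sigma(L)$ and $\sigma(L_1) - \sigma(L) - e$. Because the quasi-alternating hypothesis forces $\det(L) = \det(L_0) + \det(L_1)$ with $\det(L_0),\det(L_1) \geq 1$, only one combination of parities is compatible; this rules out $\sigma(L_0) = \sigma(L)$ and forces $\sigma(L_0) = \sigma(L) + 1$, and similarly rules out $\sigma(L_1) - \sigma(L) = e \pm 1$ and forces $\sigma(L_1) = \sigma(L) + e$.

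The step I expect to require the most care is the bookkeeping for $L_1$: since the unoriented resolution changes the number of components and generally requires a global reorientation of one side, both $\mu$ and the count of positive/negative crossings shift in tandem, and the sign-determinant formula must be applied with these shifts taken into account. Once that accounting is done carefully, the three ingredients combine cleanly. A secondary concern is treating the potentially split smoothing (where $\det = 0$ would violate the hypothesis); the assumption $\det(L_0), \det(L_1) \geq 1$ excludes this and makes the sign formula applicable.
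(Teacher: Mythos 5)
The paper does not supply a proof of this lemma: it is cited directly from \cite[Lemma\,3]{MO}, so there is no in-paper argument for you to match. Your reconstruction has the right shape---classical bounds localize each signature difference to a small window, and the positivity and additivity of the determinants in the quasi-alternating triple, fed through a signature/determinant sign relation, then single out the correct value. This is the same interplay that underlies the Manolescu--Ozsv\'ath proof, which carries it out via Goeritz matrices and the Gordon--Litherland formula rather than via Seifert forms and Murasugi's inequalities.

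There is, however, one step that does not hold up as written. The identity $\det(L) = \det(L_0) \pm \det(L_1)$ for the two \emph{unoriented} resolutions at a single crossing is not a consequence of ``the Conway relation for the Alexander polynomial evaluated at $-1$.'' The Conway skein relation is an oriented relation involving the triple $(L_+, L_-, L_0)$, and the unoriented resolution $L_1$ is not the crossing change $L_-$; specializing the Conway relation at $t=-1$ gives a linear relation among $\det(L_+)$, $\det(L_-)$, $\det(L_0)$, not the identity you need. The correct source for the unoriented determinant skein relation is the Kauffman bracket evaluated at $A=e^{i\pi/4}$ (equivalently, the $t=-1$ specialization of the paper's Lemma~\ref{jonespolynomial}), or a cofactor expansion of the Goeritz matrix of the diagram at the crossing $c$. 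With that repair, and with the orientation and component bookkeeping for $L_1$ carried through carefully (the reorientation of a strand in the unoriented resolution is exactly what produces the shift by $e$, and also shifts $\mu$ in the sign-determinant formula, as you flag), the argument can be completed; but as currently justified the key determinant identity does not follow from the cited relation.
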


Unlike Khovanov homology which is bi-graded as explained in Section 2,  Lee homology theory is known to be filtered rather than bi-graded.  However, the Lee homology of a
quasi-alternating link can be endowed with a bi-grading structure as shown in the following theorem.

\begin{thm}\label{grading}
If $L$ is a quasi-alternating link, then the Lee homology $H(L)$ is bi-graded and
$H^{i,j}(L) \cong H^{i,j+2}(L)\cong \mathbb{Q}^{n}$ for some non-negative
integer $n$ and for any $i$ and $j$ such that $j-2i = -\sigma(L) -1$.
\end{thm}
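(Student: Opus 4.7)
The plan is to proceed by induction on the depth in the recursive construction of quasi-alternating links, equivalently on $\det(L)$ via Definition \ref{def}. The base case $L = $ unknot has $\sigma(L) = 0$ and $H(L) = \mathbb{Q}^2$ concentrated at bi-gradings $(0,-1)$ and $(0,1)$, which lie on the two diagonals $j - 2i = \pm 1 = -\sigma(L) \pm 1$, giving the conclusion with $n = 1$. For the inductive step, let $L$ be quasi-alternating at a crossing $c$ with smoothings $L_0, L_1$, both having strictly smaller determinant and thus satisfying the conclusion by induction.

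First I would observe that the short exact sequence of Khovanov complexes underlying (\ref{mainsequence}) is simultaneously a short exact sequence of Lee complexes, since Lee's differential $\Phi$ respects the subcomplex/quotient decomposition. Passing to Lee homology with the same grading shifts as in (\ref{positive}) produces a long exact sequence
\begin{equation*}
\cdots \to H^{i-1, j-1}(L_0) \to H^{i-e-1, j-3e-2}(L_1) \to H^{i, j}(L) \to H^{i, j-1}(L_0) \to H^{i-e, j-3e-2}(L_1) \to \cdots.
\end{equation*}
By Remark \ref{thin} the Khovanov homologies of $L, L_0, L_1$ are all $H$-thin, so Proposition \ref{lee}(3) supplies the Lee spectral sequence $E_1^{s,t} = \mathcal{H}^{s+t, 4s}$ with $E_2 = H(L)$; this endows $H(L)$ with a well-defined bi-grading that is compatible with the bi-gradings on the outer terms provided by the induction hypothesis. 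Using Lemma \ref{signature}, I would then check that the shifted diagonals land exactly where needed: for $H^{i, j-1}(L_0)$ the condition $(j-1) - 2i = -\sigma(L_0) \pm 1 = -\sigma(L) - 1 \pm 1$ reduces to $j - 2i = -\sigma(L) \pm 1$, and for $H^{i-e-1, j-3e-2}(L_1)$ the condition $(j-3e-2) - 2(i-e-1) = j - 2i - e = -\sigma(L_1) \pm 1 = -\sigma(L) - e \pm 1$ again reduces to $j - 2i = -\sigma(L) \pm 1$. Thus the middle term $H^{i,j}(L)$ is supported on precisely the two expected diagonals.

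To close the induction I would establish the isomorphism $H^{i,j}(L) \cong H^{i, j+2}(L)$ on the lower diagonal $j - 2i = -\sigma(L) - 1$ by running the long exact sequence in parallel for $(i,j)$ and $(i, j+2)$ and invoking the inductive isomorphisms $H^{\cdot, \cdot}(L_k) \cong H^{\cdot, \cdot + 2}(L_k)$ on the corresponding diagonals of $L_k$ for $k = 0, 1$. The hard part will be verifying the naturality required for a five-lemma-type comparison: one must check that the inductive bi-grading isomorphisms for $L_0$ and $L_1$ intertwine the connecting homomorphisms of the two parallel long exact sequences (shifted by $(0,2)$ in the quantum grading), forcing the dimensions of the middle terms $H^{i,j}(L)$ and $H^{i, j+2}(L)$ to coincide. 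This amounts to showing that the pairing of Khovanov generators responsible for the Knight Move structure is preserved under the quasi-alternating reduction, which I expect to follow from the naturality of $\Phi$ relative to the short exact sequence of Lee complexes above together with the signature identities of Lemma \ref{signature}.
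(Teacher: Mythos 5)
Your setup mirrors the paper's (induction on determinant, the Lee long exact sequence with the same grading shifts, the diagonal bookkeeping via Lemma~\ref{signature}), but the mechanism you propose for closing the induction---a ``five-lemma-type comparison'' between the two long exact sequences at $(i,j)$ and $(i,j+2)$---has a genuine gap that the paper's proof is specifically engineered to avoid. There is no natural chain map relating the Lee long exact sequence to its quantum-shifted copy; the inductive isomorphisms $H^{\cdot,\cdot}(L_k)\cong H^{\cdot,\cdot+2}(L_k)$ are statements about dimensions, not maps that intertwine the connecting homomorphisms. Naturality of $\Phi$ with respect to the inclusion/quotient of Lee complexes does not help here, because it compares $L_0, L_1, L$ --- not $(i,j)$ with $(i,j+2)$. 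You can replace the missing naturality by an Euler-characteristic count, but only after the infinite long exact sequence has been cut down to bounded exact sequences, and even then a residual discrepancy $-\dim H^{i-1,j-1}(L_0)+\dim H^{i-e,j-3e}(L_1)$ survives (these two terms sit on the \emph{upper} diagonal of $L_0$ and the \emph{lower} diagonal of $L_1$ respectively, so the inductive hypothesis does not identify them, and they do not cancel for free).

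The paper's proof supplies exactly the ingredient you are missing: from Proposition~\ref{lee}(2) it observes that if $H^i(L)\neq 0$ then $H^{i\pm 1}(L)=0$, and it applies this to $L$, $L_0$ and $L_1$. This truncates the infinite Lee long exact sequence into the six-term exact sequences (\ref{first}), (\ref{second}), and it then forces $H^{i-1,j+1}(L_0)=0=H^{i-e,j-3e-2}(L_1)$ by the diagonal constraints. The residual terms are then handled by a four-way case analysis according to whether $H^{i,j-1}(L_0)$ and $H^{i-e-1,j-3e-2}(L_1)$ vanish; in the mixed cases the paper derives the vanishing of the remaining term (e.g.\ $H^{i-e,j-3e}(L_1)$) by a contradiction argument that runs the \emph{ungraded} truncated sequence (\ref{zero}) against the inductive hypothesis on $L_1$. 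Without the truncation coming from Proposition~\ref{lee}(2) and this case analysis, the proposal does not close: the ``hard part'' you flag is indeed the crux, and it does not follow from naturality of $\Phi$.
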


\begin{proof}
The link $L$  is  $H$-thin as  it is quasi-alternating,  \cite[Theorem\,1]{MO}. Consequently, the bi-grading of $\mathcal{H}(L)$
induces a bi-grading on $H(L)$  as a result of Part 3 of
Proposition \ref{lee}. We use induction on the determinant of the link $L$ to
prove the second claim in the theorem. It is clear that the result holds when $\det(L) =1$ since
the only quasi-alternating link of determinant 1 is the unknot \cite[Prop.\,3.2]{G2}.
Now suppose that  the result holds for all links of determinant less than $m = \det(L)$.
In particular, it holds for the links $L_{0}$ and $L_{1}$. It is easy to see that
if $H^{i}(L) \neq 0$ then $H^{i\pm 1}(L) = 0$ as a result of Part 2 of Proposition
\ref{lee}. If we use this fact in the long exact sequence of the cohomology groups
$H(L), H(L_{0})$ and $H(L_{1})$ that is mentioned for the first time in the proof
of \cite[Theorem\,4.2]{L}, then we obtain

\begin{align}\label{leesequence}
0 \longrightarrow \overline{H}^{i-1}(L_{0})\longrightarrow \overline{H}^{i-1}(L_{1})
\longrightarrow \overline{H}^{i}(L)\longrightarrow \overline{H}^{i}(L_{0})
\longrightarrow \overline{H}^{i}(L_{1})\longrightarrow 0.
\end{align}

As a result of the facts that $x(L_{0}) = x(L), y(L_{0}) = y(L) - 1,
x(L_{1}) = x(L) + e$ and $y(L_{1}) = y(L)-e-1$, we obtain the following exact sequences:

\begin{align}\label{zero}
0 \longrightarrow H^{i-1}(L_{0})\longrightarrow H^{i-e-1}(L_{1})
\longrightarrow H^{i}(L)\longrightarrow H^{i}(L_{0})\longrightarrow H^{i-e}(L_{1})
\longrightarrow 0.
\end{align}
 and
\begin{small}
\begin{align}\label{first}
0 \longrightarrow H^{i-1,j-1}(L_{0})\longrightarrow H^{i-e-1,j-3e-2}(L_{1})
\longrightarrow H^{i,j}(L)\longrightarrow H^{i,j-1}(L_{0})
\longrightarrow H^{i-e,j-3e-2}(L_{1})\longrightarrow 0
\end{align}
\end{small}

The above exact sequence still holds if we replace $j$ by $j+2$ as follows:
\begin{small}
\begin{align}\label{second}
0 \longrightarrow H^{i-1,j+1}(L_{0})\longrightarrow H^{i-e-1,j-3e}(L_{1})
\longrightarrow H^{i,j+2}(L)\longrightarrow H^{i,j+1}(L_{0})
\longrightarrow H^{i-e,j-3e}(L_{1})\longrightarrow 0
\end{align}
\end{small}

It is easy to see that $H^{i-1,j+1}(L_{0}) = 0 = H^{i-e,j-3e-2}(L_{1})$ since
$(j+1)-2(i-1) = j -2i + 3 = -\sigma(L) -1 + 3 = -\sigma(L_{0}) + 3 \neq -\sigma(L_{0}) \pm 1$
and $(j-3e-2)-2(i-e) = j-2i -e -2 = -\sigma(L) -1 -e-2=-\sigma(L_{1})-3 \neq -\sigma(L_{1})
\pm 1$, respectively provided that $j-2i = -\sigma(L) -1 $. Also as a result of the induction
hypotheses on $L_{0}$ and $L_{1}$, we obtain $H^{i-e-1,j-3e-2}(L_{1}) \cong
H^{i-e-1,j-3e}(L_{1})$ and $H^{i,j-1}(L_{0}) \cong H^{i,j+1}(L_{0})$ since $(j-3e)-2(i-e-1)
= j-2i-e+2=-\sigma(L)-1-e+2=-\sigma(L_{1})+1, (j-3e-2)-2(i-e-1) = j-2i-e=-\sigma(L)-1-e=
-\sigma(L_{1})-1$ and $(j-1)-2i = j -2i -1 = -\sigma(L)-1-1=-\sigma(L_{0})-1, (j+1)-2i =
j -2i +1 = -\sigma(L)-1+1=-\sigma(L_{0})+1$. Now we have four cases to consider:

\begin{enumerate}
\item If $H^{i-e-1,j-3e-2}(L_{1}) \cong H^{i-e-1,j-3e}(L_{1}) = 0$ and
$H^{i,j-1}(L_{0}) \cong H^{i,j+1}(L_{0}) = 0$, then $H^{i,j+2}(L) = 0 = H^{i,j}(L)$.
\item If $H^{i-e-1,j-3e-2}(L_{1}) \cong H^{i-e-1,j-3e}(L_{1}) \neq 0$ and
$H^{i,j-1}(L_{0}) \cong H^{i,j+1}(L_{0}) \neq 0$, then as a result of the
fact that if $H^{i}(L) \neq 0$ then $H^{i\pm 1}(L) = 0$ obtained from Part 2
of Proposition \ref{lee} we conclude that $H^{i-1,j-1}(L_{0}) = 0 =
H^{i-e,j-3e}(L_{1})$. This implies that $ H^{i,j}(L)\cong H^{i,j-1}(L_{0})
\oplus  H^{i-e-1,j-3e-2}(L_{1}) \cong H^{i,j+1}(L_{0}) \oplus  H^{i-e-1,j-3e}(L_{1})
\cong  H^{i,j+2}(L)$.
\item If $H^{i-e-1,j-3e-2}(L_{1}) \cong H^{i-e-1,j-3e}(L_{1}) = 0$ and
$H^{i,j-1}(L_{0}) \cong H^{i,j+1}(L_{0}) \neq 0$,  then as a result of the
fact that if $H^{i}(L) \neq 0$ then $H^{i\pm 1}(L) = 0$ obtained from Part 2
of Proposition \ref{lee} we conclude that $H^{i-1,j-1}(L_{0}) = 0$. In particular,
we obtain $ H^{i,j-1}(L_{0})\cong H^{i,j}(L) \oplus  H^{i-e,j-3e-2}(L_{1})
\cong H^{i,j}(L)$ and  $H^{i,j+1}(L_{0}) \cong H^{i,j+2}(L) \oplus  H^{i-e,j-3e}(L_{1})$.
To prove the required result it is enough to prove that $H^{i-e,j-3e}(L_{1}) = 0$.
We prove this by contradiction. If we assume that $H^{i-e,j-3e}(L_{1}) \neq 0$, then
$H^{i-e}(L_{1}) \neq 0$ as a result of the fact that $H^{i-e}(L_{1}) \cong
H^{i-e, j-3e+2}(L_{1})\oplus H^{i-e, j-3e}(L_{1})$ obtained from Part 3 of
Proposition \ref{lee} because $L_{1}$ is $H$-thin as it is quasi-alternating.
Now according to the long exact sequence \ref{zero} and the assumption in this
case we conclude that $H^{i}(L_{0}) \cong H^{i}(L) \oplus  H^{i-e}(L_{1})
\cong \left(H^{i,j}(L)\oplus H^{i,j+2}(L)\right) \oplus \left(H^{i-e,j-3e}(L_{1})
\oplus H^{i-e,j-3e+2}(L_{1})\right)$. Thus we obtain $0 = H^{i-e,j-3e+2}(L_{1})
\cong H^{i-e,j-3e}(L_{1})$, where the isomorphism follows by the induction
hypothesis on the link $L_{1}$.
\item If $H^{i-e-1,j-3e-2}(L_{1}) \cong H^{i-e-1,j-3e}(L_{1}) \neq 0$ and
$H^{i,j-1}(L_{0}) \cong H^{i,j+1}(L_{0}) = 0$. This case can be  discussed using
a similar argument as in the previous case.
\end{enumerate}
\end{proof}
\begin{rem}\label{gradingnew}
In fact, if the $E_{2}$- and the $E_{\infty}$-pages of the spectral sequence
of the double complex $(\mathcal{C}(L), d(L), \Phi(L))$ are isomorphic, then
the Lee homology $H(L)$ is bi-graded since these two pages are isomorphic to
$\frac{\Ker (\Phi:  \mathcal{H}(L) \rightarrow \mathcal{H}(L))}
{\Image (\Phi: \mathcal{H}(L) \rightarrow \mathcal{H}(L))}$ and $H(L)$, respectively.
As a result of this fact, we can define the Lee polynomial invariant of  such a link as follows:
\begin{equation}\label{leepolynomial}
\Le(L)(t,q) = \sum_{i,j\in\mathbb{Z}}t^{i}q^{j}\dim H^{i,j}(L).
\end{equation}
\end{rem}

As a consequence of  Theorem \ref{grading} and Part 3 of Proposition \ref{lee},
we establish the Knight Move Conjecture \cite[Conjecture\,1]{Ba} for the class of
quasi-alternating links. This generalizes \cite[Theorem\,4.5]{L} to this class of links.
\begin{coro}
Let $L$  be  an oriented quasi-alternating link with  components $L_{1},L_{2},
\ldots,L_{n}$. Let  $l_{jk}$ denote  the linking number of  the components $L_{j}$ and $L_{k}$.
Then, we have 

\[ \Le(L)(t,q) = q^{-\sigma(L)}(q^{-1} +q)(\sum_{E\subset \{2,3,\ldots,n\}}
(tq^{2})^{\sum_{j\in E, k\notin E} 2l_{jk}}).\] Therefore, we obtain 
\[
\Kh(L)(t,q) = q^{-\sigma(L)}\{(q^{-1} +q)(\sum_{E\subset \{2,3,\ldots,n\}}
(tq^{2})^{\sum_{j\in E, k\notin E} 2l_{jk}})+ (q^{-1} +tq^{2}.q)\Kh^{'}(L)(tq^{2})
\},
\]
for some polynomial $Kh^{'}(L)$.
\end{coro}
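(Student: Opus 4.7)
The plan is to read off $\Le(L)(t,q)$ from the bi-grading in Theorem \ref{grading} combined with the generator count in Proposition \ref{lee}(2), and then to derive $\Kh(L)(t,q)$ by analyzing the knight-move structure forced by the $H$-thinness of $\mathcal{H}(L)$ and the collapse of the Lee spectral sequence at $E_2$.

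For the first formula, since $L$ is quasi-alternating $\mathcal{H}(L)$ is $H$-thin (Remark \ref{thin}), so Part 3 of Proposition \ref{lee} places $H(L)$ on the two diagonals $j - 2i = -\sigma(L) \pm 1$. Theorem \ref{grading} then gives, for each $i$, a non-negative integer $n_i$ with $\dim H^{i, 2i - \sigma(L) - 1}(L) = \dim H^{i, 2i - \sigma(L) + 1}(L) = n_i$ and all other bigraded components zero. Substituting into the definition of $\Le(L)$ yields
\[ \Le(L)(t,q) = q^{-\sigma(L)}(q^{-1} + q) \sum_i n_i (tq^2)^i. \]
By Proposition \ref{lee}(2), $2 n_i = \dim H^i(L)$ equals twice the number of subsets $E \subset \{2, \ldots, n\}$ whose linking-number contribution lands in homological degree $i$; reindexing the sum over $i$ as a sum over $E$ then produces the claimed expression for $\Le(L)(t,q)$.

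For $\Kh(L)(t,q)$, the Lee differential $\Phi$ has bidegree $(1,4)$, so on the $E_1$-page $\mathcal{H}(L)$ it can pair a class at $(i,j)$ on the lower diagonal only with a class at $(i+1, j+4)$ on the upper diagonal. The classes surviving to $E_\infty = H(L)$ assemble into $\Le(L)$ as computed above, while each cancelling pair contributes $t^i q^j(1 + t q^4)$ with $j = 2i - \sigma(L) - 1$. Factoring $q^{-\sigma(L)} q^{-1}$ out of each pair and writing $q^{-1}(1 + tq^4) = q^{-1} + tq^2 \cdot q$, I set $\Kh'(L)(x) := \sum_{\text{pairs}} x^{i_{\text{pair}}}$ and recover the second displayed formula. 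The main conceptual input is Theorem \ref{grading} combined with $E_2$-collapse from Proposition \ref{lee}(3); the only substantive check is that the bidegree $(1,4)$ of $\Phi$ together with support on two diagonals separated by $2$ in the quantum direction forces all cancellations to be knight moves with shift exactly $(1,4)$ and nothing else, which is immediate from the arithmetic of the two diagonals. Everything else is bookkeeping.
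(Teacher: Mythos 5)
Your proof is correct and takes essentially the approach the paper intends: the paper declares this corollary an immediate consequence of Theorem \ref{grading} and Proposition \ref{lee}(3) with no written proof, and you have supplied exactly the missing derivation (reading $\Le(L)$ off the bi-grading and generator count, then packaging the $d_1$-cancellations forced by $E_2$-collapse into knight-move pairs). A minor aside: the exponent $\sum 2l_{jk}$ in the corollary and the condition $\sum \lk = i$ in the paper's Proposition \ref{lee}(2) differ by a factor of $2$; this is a transcription slip in the proposition relative to Lee's original statement, not a flaw in your reasoning.
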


One  can define two types of \textsl{breadth} for the Khovanov homology of
any link $L$ based on the differential and quantum gradings of Khovanov homology. These two
types are denoted hereafter  by $\breadth_{i}(\mathcal{H}(L))$ and
$\breadth_{j}(\mathcal{H}(L))$ and  are defined to be the difference
between the maximum and minimum corresponding gradings. For instance, the $\breadth_{i}(\mathcal{H}(3_1))=3$ while
$\breadth_{j}(\mathcal{H}(3_1))=8$ as one  can see from  Table \ref{Khtrefoil}.
In the case of quasi-alternating links, these two types
of breadth are related as follows.
\begin{coro}\label{breadth1}
If $L$ is a quasi-alternating link, then $\breadth_{j}(\mathcal{H}(L)) =
2\breadth_{i}(\mathcal{H}(L)) + 2$.
\end{coro}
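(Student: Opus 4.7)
The plan is to leverage the $H$-thinness of quasi-alternating Khovanov homology together with the bi-graded structure of Lee homology established in Theorem \ref{grading}. Since $L$ is quasi-alternating, Remark \ref{thin} gives that $\mathcal{H}(L)$ is $H$-thin, so every nonzero $\mathcal{H}^{i,j}(L)$ lies on one of the two diagonals $j-2i = -\sigma(L)+1$ (the \emph{upper} diagonal) or $j-2i = -\sigma(L)-1$ (the \emph{lower} diagonal). Writing $i_{\max}, i_{\min}$ and $j_{\max}, j_{\min}$ for the extremal homological and quantum gradings of $\mathcal{H}(L)$, one gets for free that $j_{\max}\leq 2i_{\max}-\sigma(L)+1$ and $j_{\min}\geq 2i_{\min}-\sigma(L)-1$, because both diagonals are strictly increasing in $i$. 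The whole task reduces to showing equality in both bounds, since then $\breadth_j(\mathcal{H}(L)) = 2\breadth_i(\mathcal{H}(L)) + 2$ falls out by subtraction.

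To pin down the extremal columns I would read off the structure of $\mathcal{H}(L)$ from the spectral sequence of Part 3 of Proposition \ref{lee}, whose induced differential has bi-degree $(1,4)$ in $(i,j)$. On an $H$-thin page this differential can only connect a class on the lower diagonal at column $i$ to a class on the upper diagonal at column $i+1$, since the other potential target lies off the two diagonals. Consequently the cancellations taking $\mathcal{H}(L)$ to $H(L)$ come in \emph{knight move} pairs: one lower diagonal class at $i$ paired with one upper diagonal class at $i+1$. What survives to $H(L)$ is, by Theorem \ref{grading}, perfectly balanced between the two diagonals in each homological column in which it is nonzero.

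With this bookkeeping the extremal arguments are short. At column $i_{\max}$, any generator must contribute to the upper diagonal: a lower diagonal class would either have to be the bottom of a knight move, forcing a nonzero class in column $i_{\max}+1$ and contradicting maximality, or be a Lee survivor, in which case Theorem \ref{grading} automatically supplies an accompanying upper diagonal class at the same column. Hence $j_{\max} = 2i_{\max}-\sigma(L)+1$. The symmetric argument at column $i_{\min}$, ruling out an upper diagonal knight move top whose partner would sit in column $i_{\min}-1$, gives $j_{\min} = 2i_{\min}-\sigma(L)-1$, and the conclusion follows.

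The main obstacle is simply keeping the knight move bookkeeping straight; the two ingredients doing all the work are the $(1,4)$ bi-degree of the differential inducing knight moves on an $H$-thin page and the diagonal balance $H^{i,j}(L)\cong H^{i,j+2}(L)$ provided by Theorem \ref{grading}. I do not foresee any further technical hurdle, and the sanity checks on the unknot (giving $\breadth_j=2$, $\breadth_i=0$) and on the trefoil in Table \ref{Khtrefoil} (giving $\breadth_j=8=2\cdot 3+2$) match the identity.
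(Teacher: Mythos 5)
Your proof is correct and uses the same ingredients as the paper's: $H$-thinness, the $(1,4)$ bidegree of the Lee spectral-sequence differential to force knight-move cancellations into the adjacent column, and the balance $H^{i,j}(L)\cong H^{i,j+2}(L)$ from Theorem \ref{grading} to rule out a lone survivor on the wrong diagonal at an extremal column. The paper pins down where the extremal entries sit by exactly this dichotomy (cancel into a column beyond the range, or survive and force a partner via Theorem \ref{grading}), so your argument is essentially the paper's proof, just phrased a bit more explicitly.
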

\begin{proof}
In the above description of Khovanov homology, we claim that
the most left-bottom nonzero entry that comes right after any gap or at the beginning of the
Khovanov homology of $L$ will be on the lower diagonal with $j-2i = -\sigma(L) - 1$
and the most right-top nonzero entry right before any gap or at the end of the Khovanov
homology of $L$ will be on the upper diagonal with $j-2i = -\sigma(L) + 1$. Each one of
these two entries is the only entry in that row.

We show the first case of the claim and the second case of the claim follows using
a similar argument. Suppose that the most left-bottom entry shares the same row with
another entry to the right. In this case, the leftmost  entry will be on the upper
diagonal and therefore it will be the only entry
to survive in $H^{i}(L)$ in that column, where $i$ is the differential grading of the most left-bottom
entry. This is impossible according to the result of Theorem \ref{grading}.

Thus the minimum quantum grading $j_{\min}$ is related to the minimum differential grading
$i_{\min}$ by $j_{\min}-2i_{\min} = -\sigma(L) -1$ and the maximum quantum grading
$j_{\max}$ is related to the maximum differential grading $i_{\max}$ by
$j_{\max}-2i_{\max} = -\sigma(L) +1$. Now the result follows because
$\breadth_{j}(\mathcal{H}(L)) = j_{\max} - j_{\min} = (2i_{\max} -\sigma(L) + 1) -
(2i_{\min} - \sigma(L) - 1) = 2(i_{\max} - i_{\min}) + 2 =
2\breadth_{i}(\mathcal{H}(L)) + 2$.
\end{proof}

We can give an explicit formula for the Jones polynomial of any quasi-alternating
link in terms of the integers that are given in the above description of its
Khovanov homology. Indeed, these  integers can be expressed in terms of the dimensions
of the Khovanov and Lee homologies of the given link. More precisely, if  $a_{l}$ is  the  $(i_{\min}+l-1,j_{\min}+2l-2)$-entry of   the lower diagonal in the above
description of $\mathcal{H}(L)$ after taking the quotient by  $H(L)$. Then we have
\begin{lemm}\label{integers}
If $L$ is a quasi-alternating link, then
\begin{equation*}
a_{l} =
\begin{cases}
\dim \mathcal{H}^{i_{\min}}(L) - \dim H^{i_{\min}}(L), & \text{if} \ {l= 1},
      \\
\dim \mathcal{H}^{i_{\min}+l-1}(L) - \dim H^{i_{\min}+l-1}(L) - a_{l-1}, & \text{if} \ {1 < l < i_{\max}-i_{\min} }
\\
\dim \mathcal{H}^{i_{\max}}(L) - \dim H^{i_{\min}}(L), & \text{if} \ { i = i_{\max}-i_{\min}} .
     \\
\end{cases}
\end{equation*}
\end{lemm}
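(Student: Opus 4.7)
The plan is to extract the integers $a_l$ from the Lee spectral sequence of Part~(3) of Proposition~\ref{lee}. By Remark~\ref{thin}, $\mathcal{H}(L)$ is supported on the two diagonals $j-2i=-\sigma(L)\pm 1$; this H-thinness forces the spectral sequence to collapse at $E_2$, so the Lee homology $H(L)$ is the kernel-mod-image of a single differential on $E_1=\mathcal{H}(L)$. For bidegree reasons this differential, restricted to column $i$, can only send the lower-diagonal summand at column $i$ into the upper-diagonal summand at column $i+1$; let $r_i$ denote its rank.

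The main step is to identify $a_l$ with $r_{i_{\min}+l-1}$. Since no image lands on the lower diagonal, the Lee contribution on the lower diagonal at column $i$ is the kernel of this component, so the dimension remaining on the lower diagonal after quotienting by $H(L)$ is exactly $r_i$. Dually, nothing emanates from the upper diagonal, so the Lee contribution on the upper diagonal at column $i$ is the cokernel of the incoming map from column $i-1$, leaving residual dimension $r_{i-1}$. Adding the two residual dimensions at column $i_{\min}+l-1$ yields
\[
\dim\mathcal{H}^{i_{\min}+l-1}(L)-\dim H^{i_{\min}+l-1}(L)=a_l+a_{l-1},
\]
which, rearranged, is exactly case~(2).

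Cases~(1) and~(3) follow because there is no Khovanov homology in the columns $i_{\min}-1$ and $i_{\max}+1$. For case~(1), $r_{i_{\min}-1}=0$ because its domain is empty, giving $a_0=0$, so the identity at $l=1$ reduces to $a_1=\dim\mathcal{H}^{i_{\min}}(L)-\dim H^{i_{\min}}(L)$. For case~(3), $r_{i_{\max}}=0$ because its codomain is empty, so $a_{i_{\max}-i_{\min}+1}=0$; applying the identity at column $i_{\max}$ then collapses to $a_{i_{\max}-i_{\min}}=\dim\mathcal{H}^{i_{\max}}(L)-\dim H^{i_{\max}}(L)$.

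The substantive structural input---the collapse of the spectral sequence at $E_2$ and the bi-graded refinement of $H(L)$---has already been supplied by Theorem~\ref{grading} and the knight-move corollary; the rest is spectral-sequence bookkeeping. The only subtle point is that the knight-move differential neither maps into the lower diagonal nor out of the upper diagonal, which is precisely what makes the kernel/cokernel identifications clean.
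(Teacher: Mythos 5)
Your argument is correct and it is precisely the content the paper compresses into a one-line remark (``the result is a straightforward consequence of the fact that $L$ is $H$-thin and Part 3 of Proposition \ref{lee}''): you unpack the rank bookkeeping of the knight-move differential between the two diagonals that the paper takes for granted. Note also that your derivation implicitly fixes what appear to be two typos in the third case of the statement, namely that the condition should read $l=i_{\max}-i_{\min}$ rather than $i=i_{\max}-i_{\min}$, and the subtracted term should be $\dim H^{i_{\max}}(L)$ rather than $\dim H^{i_{\min}}(L)$.
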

\begin{proof}
The result is a straightforward consequence of the fact that $L$ is  $H$-thin and Part 3 of Proposition \ref{lee}.
\end{proof}

\begin{coro}
If the link $L$ is quasi-alternating, then its Jones polynomial is given by
\begin{multline}\label{mainnew}
V_{L}(t)  = [(-1)^{i_{\min}}a_{1}q^{j_{\min}+1} + (-1)^{i_{\max}}a_{(i_{\max}-i_{\min})}
q^{j_{\max}-1} \\  + \sum_{j} n_{j}q^{j}+ \sum_{l=1}^{\frac{j_{\max}-j_{\min}-4}{2}}
(-1)^{l+i_{\min}}(a_{l}+a_{l+1})q^{j_{\min}+2l+1}]_{q = - \sqrt{t}},
\end{multline}
where the first sum is taken over all $j_{\min}+1 \leq j \leq j_{\max}-1$ such that
$H^{i,j+1}(L) \cong H^{i,j-1}(L) \cong \mathbb{Q}^{n_{j}}$ for some positive integer $n_{j}$.
\end{coro}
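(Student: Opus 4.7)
The plan is to apply equation~(\ref{mainequation}), which gives
\[
\sum_{i,j\in\mathbb{Z}}(-1)^{i}q^{j}\dim\mathcal{H}^{i,j}(L) \;=\; (q^{-1}+q)\,V_L(t)\bigr|_{t^{1/2}=-q},
\]
so $V_L(t)$ is obtained by computing the Euler polynomial $E(q)$ of $\mathcal{H}(L)$ in $q$, dividing by $(q^{-1}+q)$, and substituting $q=-\sqrt{t}$. The computation can be carried out explicitly because the bigraded structure of $\mathcal{H}(L)$ is now completely understood: by Remark~\ref{thin} and Theorem~\ref{grading}, the support of $\mathcal{H}(L)$ lies on the two diagonals $j-2i=-\sigma(L)\pm 1$, and the Lee differential pairs off the non-Lee summands according to knight moves.

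More precisely, Theorem~\ref{grading} together with Part~3 of Proposition~\ref{lee} implies that the Lee differential $\Phi$ identifies the non-Lee part of $\mathcal{H}^{i_{\min}+l-1,\,j_{\min}+2l-2}(L)$ (of dimension $a_l$ by Lemma~\ref{integers}) with the non-Lee part of $\mathcal{H}^{i_{\min}+l,\,j_{\min}+2l}(L)$, which therefore also has dimension $a_l$, while the Lee homology itself sits symmetrically on both diagonals in each column. I would then split $E(q)=E_{\mathrm{Lee}}(q)+E_{\mathrm{KM}}(q)$. The Lee part collects, for each column $i$ with $\ell_i:=\dim H^{i,2i-\sigma-1}(L)$, the contribution
\[
(-1)^{i}\ell_i\bigl(q^{2i-\sigma-1}+q^{2i-\sigma+1}\bigr) \;=\; (-1)^{i}\ell_i\,q^{2i-\sigma}(q^{-1}+q),
\]
so after division by $(q^{-1}+q)$ it produces precisely the middle sum $\sum_j n_j q^j$ (indexed by those $j=2i-\sigma$ with $\ell_i\neq 0$).

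The knight-move part collects, for each $l=1,\dots,N$ with $N=i_{\max}-i_{\min}$, contributions from the two paired positions:
\[
(-1)^{i_{\min}+l-1}a_l\bigl[q^{j_{\min}+2l-2}-q^{j_{\min}+2l+2}\bigr] \;=\; (-1)^{i_{\min}+l-1}a_l\,q^{j_{\min}+2l-2}(1-q^4).
\]
Using $(1-q^4)=(q^{-1}+q)(q-q^3)$, division by $(q^{-1}+q)$ yields
\[
(q-q^3)\sum_{l=1}^{N}(-1)^{i_{\min}+l-1}a_l\,q^{j_{\min}+2l-2}.
\]
Expanding $(q-q^3)$ as a difference and re-indexing adjacent terms lets each $a_l$ combine with $a_{l+1}$ at the common exponent $q^{j_{\min}+2l+1}$, producing the telescoping middle sum $\sum_{l=1}^{N-1}(-1)^{l+i_{\min}}(a_l+a_{l+1})q^{j_{\min}+2l+1}$, while the leftmost term $l=1$ of the first piece and the rightmost term $l=N$ of the second piece survive uncancelled and give the boundary contributions $(-1)^{i_{\min}}a_1 q^{j_{\min}+1}$ and $(-1)^{i_{\max}}a_N q^{j_{\max}-1}$, where I use $(-1)^{i_{\min}+N}=(-1)^{i_{\max}}$ and $j_{\min}+2N+1=j_{\max}-1$.

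The main obstacle is the sign and index bookkeeping at the two boundaries: the block $a_1$ on the lower diagonal of column $i_{\min}$ has no partner to its left, and the block $a_N$ on the upper diagonal of column $i_{\max}$ has no partner to its right, so these terms escape the telescoping and must be verified to produce exactly the coefficients in the stated formula. Once the boundary analysis is carried out cleanly, adding $E_{\mathrm{Lee}}/(q^{-1}+q)$ to $E_{\mathrm{KM}}/(q^{-1}+q)$ and substituting $q=-\sqrt{t}$ yields the asserted expression (\ref{mainnew}) for $V_L(t)$.
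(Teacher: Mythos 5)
Your proposal is correct and follows essentially the same route as the paper: both decompose the graded Euler characteristic of $\mathcal{H}(L)$ into a Lee part (which gives the $\sum_j n_j q^j$ term column by column) and a knight-move part, factor out $(q^{-1}+q)$ from each knight-move pair, and then re-index/telescope to obtain the boundary terms and the interior sum. The factorization you use, $q^{j_{\min}+2l-2}-q^{j_{\min}+2l+2}=(q^{-1}+q)(q^{j_{\min}+2l-1}-q^{j_{\min}+2l+1})$, is exactly the identity in the paper's proof, and your boundary analysis at $l=1$ and $l=N$ reproduces the two edge terms with the correct signs via $i_{\max}=i_{\min}+N$ and $j_{\max}=j_{\min}+2N+2$.
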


\begin{proof}
The entries at each column of
$H(L)$ contribute $n_{j}(q^{m}+q^{m+2}) = n_{j}q^{m+1}(q^{-1}+q)$ to $\Kh(-1,q)$ for some
integer $m$. Now the first sum follows after we divide by $q^{-1}+q$ and go over all the
columns of $H(L)$.

As a result of Part 3 of Proposition \ref{lee}, we conclude that there is a pairing (knight move)
in the entries of $\mathcal{H}(L)/H(L)$. In particular, the $(i_{\min}+l-1,j_{\min}+2l-2)$-entry of the lower diagonal is equal to the $(i_{\min}+l,j_{\min}+2l+2)$-entry for $1 \leq l \leq i_{\max}-i_{\min}$. It is not too hard to see
that the $(i_{\min}+l-1,j_{\min}+2l-2)$-entry is simply $a_{l}$ as a result of Lemma \ref{integers}. These two entries contribute
$(-1)^{i_{\min}+l-1} a_{l}(q^{j_{\min}+2l-2} -q^{j_{\min}+2l+2}) = (-1)^{i_{\min}+l-1}
 a_{l}(q^{-1}+q)(q^{j_{\min}+2l-1} - q^{j_{\min}+2l+1})$ to the polynomial $\Kh(-1,q)$.
The second sum follows after we divide by
$q^{-1}+q$ and go over all such entries.
\end{proof}

\begin{lemm}\label{coeff}
The coefficient $b_{j}$ of the monomial $q^{j}$ in the above formula is given by
\begin{equation*}
b_{j} =
\begin{cases}
(-1)^{i_{\min}}a_{1} + \frac{1}{2}\dim H^{i_{\min}}(L), & \text{if} \ {j= j _{\min}+1},
      \\
(-1)^{l+i_{\min}}(a_{i-i_{\min}}+a_{i-i_{\min}+1}) + \frac{1}{2}\dim H^{i_{\min}+ l}(L), & \text{if} \ {j_{\min} +1 < j < j_{\max}-1 }
\\
(-1)^{i_{\max}}a_{i_{\max}-i_{\min}} + \frac{1}{2}\dim H^{i_{\max}}(L), & \text{if} \ { j = j_{\max}-1},
     \\
\end{cases}
\end{equation*}
where $ l = \frac{j-j_{\min}-1}{2}$. Moreover, $b_{j}$ is zero only if the two terms that define $b_{j}$ are equal to zero.
\end{lemm}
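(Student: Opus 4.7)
The plan is to extract the coefficient $b_j$ of the monomial $q^j$ in the bracketed expression for $V_L(t)$ given in the preceding corollary, and match it term-by-term with the three cases of the lemma. For the middle sum $\sum_j n_j q^j$, which records the Lee contribution, Theorem \ref{grading} applied to the column $i = i_{\min}+l$ with $l = (j-j_{\min}-1)/2$ gives $H^{i,j-1}(L) \cong H^{i,j+1}(L) \cong \mathbb{Q}^{n_j}$, so $n_j = \tfrac{1}{2}\dim H^{i_{\min}+l}(L)$; this accounts for the dimension term appearing in each of the three formulas.

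Next I would handle the knight-move sum. For an interior exponent $j = j_{\min}+2l_0+1$ with $1 \le l_0 \le i_{\max}-i_{\min}-1$, the unique contributing index is $l = l_0$, giving $(-1)^{l_0+i_{\min}}(a_{l_0}+a_{l_0+1})$; combined with the Lee contribution this reproduces the middle case of the lemma. At the endpoints $j = j_{\min}+1$ and $j = j_{\max}-1$, the summation range $1 \le l \le (j_{\max}-j_{\min}-4)/2 = i_{\max}-i_{\min}-1$ excludes both boundary exponents, so the knight-move sum contributes nothing there; the two explicit boundary monomials $(-1)^{i_{\min}}a_1 q^{j_{\min}+1}$ and $(-1)^{i_{\max}}a_{i_{\max}-i_{\min}} q^{j_{\max}-1}$ then supply, together with the Lee part, the first and third cases of the lemma.

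For the \emph{moreover} assertion, the two summands composing $b_j$ both originate from the column $i = i_{\min}+l$ of Khovanov homology and hence enter the graded Euler characteristic under a common sign $(-1)^i$; the Lee term records the classes surviving the spectral sequence while $a_l+a_{l+1}$ counts the knight-paired classes killed by it, with $a_l+a_{l+1} = \dim\mathcal{H}^{i_{\min}+l}(L)-\dim H^{i_{\min}+l}(L)$ by Lemma \ref{integers}. Once the common sign is factored one obtains $b_j = (-1)^{l+i_{\min}}\bigl((a_l+a_{l+1}) + \tfrac{1}{2}\dim H^{i_{\min}+l}(L)\bigr)$, a signed sum of nonnegative integers, so $b_j = 0$ forces each of $a_l+a_{l+1}$ and $\dim H^{i_{\min}+l}(L)$ to vanish, which is the claim; the two boundary cases are treated analogously. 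The main technical obstacle is precisely this sign bookkeeping: one must verify that the Khovanov sign $(-1)^i$ attached to each Lee entry at column $i$ agrees with the sign carried by the knight-paired classes at the same column, so that the nonnegativity argument can be applied uniformly after factoring out $(-1)^{l+i_{\min}}$.
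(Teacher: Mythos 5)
Your term-by-term extraction of the coefficient $b_j$ from the formula in the preceding corollary is correct and matches the paper's approach for all three cases of $j$. The issue is entirely in the \emph{moreover} clause, which is exactly where you flag a ``technical obstacle'' and then leave it hanging.

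Your factored expression $b_j = (-1)^{l+i_{\min}}\bigl((a_l+a_{l+1}) + \tfrac{1}{2}\dim H^{i_{\min}+l}(L)\bigr)$ is indeed what a direct Euler-characteristic computation gives (the Lee column at $i=i_{\min}+l$ and the two knight-paired classes meeting that column all carry the factor $(-1)^i$), and it is a signed sum of nonnegative quantities, which would close the argument. But this expression does \emph{not} literally agree with the lemma's stated formula, in which the Lee term $\tfrac{1}{2}\dim H^{i_{\min}+l}(L)$ appears \emph{without} the sign $(-1)^{l+i_{\min}}$. The two coincide only if $(-1)^{l+i_{\min}}=(-1)^i=+1$ whenever $\dim H^{i}(L)\neq 0$, and you never supply the fact that forces this. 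The paper closes precisely this gap in one sentence: $\dim H^{i}(L)\neq 0$ implies $i$ is even. This follows from Lee's description of the Lee homology (Part 2 of Proposition~2.5 together with the Lee polynomial in the corollary after Theorem~\ref{grading}: the differential degree of each Lee generator is $\sum_{j\in E,\,k\notin E} 2\,l_{jk}$, an even integer). Once you have this evenness, your nonnegativity argument goes through, and it also reconciles your signed expression with the lemma's unsigned one. In short: same route as the paper, same decomposition, but you identified the crux without naming the parity fact that actually resolves it.
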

\begin{proof}
We have three cases to consider according to the value of $j$. We discuss the case $j_{\min} +1 < j < j_{\max}-1$
and the other cases follow using a similar argument. If the monomial $q^{j}$ appears in the two sums in Equation \ref{mainnew}, then we obtain $n_{j} = \frac{1}{2}\dim H^{i}(L)$ with $j-2i = -\sigma(L)$
and $j = j_{\min}+2l+1$ for some $l$. Now if we use the fact that $j_{\min}- 2i_{\min} = -\sigma(L)-1$, we conclude that $i = i_{\min} + l$ and hence the result of the first part follows and the second part follows easily from the fact that $i$ has to be even if $\dim H^{i}(L)\neq 0$.

\end{proof}

\begin{prop}\label{breadth2case1}
Let $L$ be a quasi-alternating link. The differential grading of $\mathcal{H}(L)$ has a gap
of length $s$ if and only if $V_{L}(t)$ has a gap of length $s$.
\end{prop}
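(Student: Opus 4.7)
The plan is to reduce the statement to a direct comparison of indices via the explicit formula in Equation \ref{mainnew} together with the non-cancellation clause of Lemma \ref{coeff}. First I would record a ``column-dimension formula'' that follows from $H$-thinness (Remark \ref{thin}), from Theorem \ref{grading}, and from the knight-move pairing of $\mathcal{H}(L)/H(L)$ asserted by Part 3 of Proposition \ref{lee}: for every $i_{\min}\le i\le i_{\max}$,
\[
\dim\mathcal{H}^{i}(L)=\dim H^{i}(L)+a_{i-i_{\min}}+a_{i-i_{\min}+1},
\]
with the convention $a_{0}=a_{i_{\max}-i_{\min}+1}=0$. Hence $\mathcal{H}^{i}(L)=0$ if and only if the three nonnegative summands on the right all vanish. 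Throughout the argument I would keep in mind that, by Lemma \ref{coeff}, the coefficient $b_{j}$ of $V_{L}(t)$ at $j=j_{\min}+2l+1$ is a sum of two nonnegative contributions (a knight-move contribution involving $a_{l}+a_{l+1}$ and a Lee contribution $\tfrac{1}{2}\dim H^{i_{\min}+l}(L)$), so no cancellation can occur.

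For the implication $(\Rightarrow)$, assume $\mathcal{H}(L)$ has a gap of length $s$ at the columns $i_{0}+1,\dots,i_{0}+s$. Applying the column-dimension formula to each of these columns yields $a_{l}=0$ for every $l\in\{i_{0}-i_{\min}+1,\dots,i_{0}-i_{\min}+s+1\}$ and $\dim H^{i}(L)=0$ for $i\in\{i_{0}+1,\dots,i_{0}+s\}$. Substituting these vanishings into Lemma \ref{coeff} shows that $b_{j}=0$ precisely for the $s$ consecutive exponents $j=j_{\min}+2l+1$ with $l\in\{i_{0}-i_{\min}+1,\dots,i_{0}-i_{\min}+s\}$. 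For the boundary exponents $l=i_{0}-i_{\min}$ and $l=i_{0}-i_{\min}+s+1$ the column-dimension formula forces at least one of the data $(a_{l},a_{l+1},\dim H^{i_{\min}+l}(L))$ entering $b_{j}$ to be nonzero, because $\mathcal{H}^{i_{0}}(L)\ne 0$ and $\mathcal{H}^{i_{0}+s+1}(L)\ne 0$; the non-cancellation clause of Lemma \ref{coeff} then guarantees $b_{j}\ne 0$ at both boundary exponents, producing a gap of length exactly $s$ in $V_{L}(t)$.

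For the converse $(\Leftarrow)$ I would run the same bookkeeping backwards. Write the Jones-polynomial gap as $b_{j^{*}+2k}=0$ for $k=1,\dots,s$ with $b_{j^{*}}\ne 0$ and $b_{j^{*}+2(s+1)}\ne 0$, and set $l^{*}=(j^{*}-j_{\min}-1)/2$. Applying Lemma \ref{coeff} to each interior vanishing gives $a_{l}=a_{l+1}=0$ and $\dim H^{i_{\min}+l}(L)=0$ for $l\in\{l^{*}+1,\dots,l^{*}+s\}$; collecting these yields $a_{l}=0$ on $\{l^{*}+1,\dots,l^{*}+s+1\}$ and $\dim H^{i}(L)=0$ on $\{i_{\min}+l^{*}+1,\dots,i_{\min}+l^{*}+s\}$, whence the column-dimension formula implies $\mathcal{H}^{i}(L)=0$ on exactly those $s$ columns. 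The non-vanishing of $b_{j^{*}}$ and $b_{j^{*}+2(s+1)}$ combined with non-cancellation forces the two bordering columns to be nonzero. The main obstacle is purely bookkeeping: one has to track the two index shifts (between the columns $i$, the knight-move index $l$, and the $q$-exponent $j$) and carefully handle the boundary cases $l=0$ and $l=i_{\max}-i_{\min}$, where the first and last rows of Equation \ref{mainnew} apply instead of the generic middle row. Once the column-dimension formula is established, the rest of the proof is a direct translation using Lemma \ref{coeff}.
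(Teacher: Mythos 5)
Your argument is correct and follows essentially the same route as the paper's own proof: both rest on the column-dimension identity $\dim\mathcal{H}^{i}(L)=\dim H^{i}(L)+a_{i-i_{\min}}+a_{i-i_{\min}+1}$ (the paper states it column by column as the three displayed equalities), combined with the non-cancellation clause of Lemma \ref{coeff}, to produce a one-to-one correspondence between the vanishing columns of $\mathcal{H}(L)$ and the vanishing coefficients $b_{j}$ of $V_{L}(t)$. Your write-up is simply a more explicit unpacking of the paper's three equivalences into the two directions of the biconditional, with the boundary cases absorbed by the convention $a_{0}=a_{i_{\max}-i_{\min}+1}=0$.
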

\begin{proof}

The result can be proved easily from the following three equivalences  which  are  direct consequences of  Lemma \ref{coeff}
\begin{enumerate}
\item $\dim \mathcal{H}^{i_{\min}}(L) = a_{1} + \dim H^{i_{\min}}(L) \neq 0 $ is equivalent to $a_{1} + \frac{1}{2}\dim H^{i_{\min}}(L) \neq 0$ which is also equivalent to $b_{j_{\min}+1} = (-1)^{i_{\min}}a_{1} + \frac{1}{2}\dim H^{i_{\min}}(L)\neq 0$.
\item $\dim \mathcal{H}^{i}(L) = a_{i-i_{\min}}+a_{i-i_{\min}+1} + \dim H^{i}(L) \neq 0$ is equivalent $ a_{i-i_{\min}}+a_{i-i_{\min}+1} + \frac{1}{2}\dim H^{i}(L) \neq 0$ which is also equivalent to $b_{j} =  (-1)^{l+i_{\min}}(a_{i-i_{\min}}+a_{i-i_{\min}+1}) + \frac{1}{2}\dim H^{i_{\min}+ l}(L) \neq 0$ where
$i_{\min} < i< i_{\max}$ and $j = 2i -\sigma(L)$.
\item $\dim \mathcal{H}^{i_{\max}}(L) = a_{i_{\max}-i_{\min}} + \dim H^{i_{\max}}(L) \neq 0$ is equivalent to $a_{i_{\max}-i_{\min}} + \frac{1}{2}\dim H^{i_{\max}}(L) \neq 0$ which is also equivalent to $b_{j_{\max}-1} = (-1)^{i_{\max}}a_{i_{\max}-i_{\min}} + \frac{1}{2}\dim H^{i_{\max}}(L) \neq 0$.
\end{enumerate}

\end{proof}

According to the long exact sequence in Equation \ref{positive}, the differential support of
$\mathcal{H}(L)$ is included in the union of the differential support of $\mathcal{H}(L_{0})$
and the differential support of $\mathcal{H}(L_{1})$ after doing the appropriate shift. We
denote the differential supports of $\mathcal{H}(L_{0})$ and $\mathcal{H}(L_{1})$ by $S_{0}$
and $S_{1}$, respectively. Also, we let $S\{k\}$ to denote the set $S$ shifted to the left by
$k$ that is $i \in S\{k\}$ if  $i+ k\in S$.

\begin{prop}\label{basic}
If there is a gap between the two polynomials $A\langle L_{0}\rangle$ and
$A^{-1}\langle L_{1} \rangle$ of length bigger than three, then
$|i_{1} - i_{0}|> 1$ for any $i_{0} \in S_{0}$ and $i_{1} \in S_{1}\{e+1\}$. In particular, the sets $S_{0}$ and $S_{1}\{e+1\}$  are disjoint and this implies that \[ \mathcal{H}^{i}(L) \cong \begin{cases}
{\mathcal{H}^{i}(L_{0})},& \mbox{if} \; i \in S_{0}, \vspace{1pt} \\
{\mathcal{H}^{i-e-1}(L_{1})}, & \mbox{if} \;  i \in S_{1}\{e+1\} \vspace{1pt},\\
{0}, & \mbox{otherwise}.
\end{cases}\]

\end{prop}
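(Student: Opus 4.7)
My plan is to reduce the statement to an application of the long exact sequence (\ref{positive}): under the gap hypothesis, the connecting homomorphism should vanish, the sequence splits, and the claimed formula for $\mathcal{H}^{i}(L)$ follows from the disjointness of $S_{0}$ and $S_{1}\{e+1\}$. The first and technically hardest step is to translate the assumed $A$-gap into a separation in the differential grading. Since $L$ is quasi-alternating at $c$, both $L_{0}$ and $L_{1}$ are quasi-alternating of strictly smaller determinant, so by Proposition \ref{breadth2case1} and Remark \ref{thin} their differential supports $S_{0}, S_{1}$ are in explicit affine bijection with the $A$-supports of $\langle L_{0}\rangle$ and $\langle L_{1}\rangle$. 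The dictionary is provided by the definition of the Jones polynomial: the substitution $t^{1/2} = A^{-2}$ combined with the writhe normalisation $(-A)^{-3w(L_{k})}$ turns an $A$-shift into a quantum $q$-shift, and the $H$-thinness relation $j - 2i = -\sigma(L_{k})\pm 1$ turns a quantum shift into an $i$-shift.

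Tracking the constants $w(L_{0}), w(L_{1})$ and $\sigma(L_{0}), \sigma(L_{1})$ (the latter controlled by Lemma \ref{signature}) together with the extra factors $A^{\pm 1}$ in $A\langle L_{0}\rangle$ and $A^{-1}\langle L_{1}\rangle$ and the shift by $e+1$ appearing in (\ref{positive}), an $A$-gap of length greater than three between $A\langle L_{0}\rangle$ and $A^{-1}\langle L_{1}\rangle$ should become the required inequality $|i_{1} - i_{0}| > 1$ for every $i_{0}\in S_{0}$ and $i_{1}\in S_{1}\{e+1\}$; in particular, $S_{0}\cap S_{1}\{e+1\} = \emptyset$. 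This is where I expect the main obstacle to lie: the arithmetic requires combining several factors of $2$ (from $A^{2}\leftrightarrow q^{-1}$, from $t^{1/2} = -q$, and from $j = 2i \mp 1 - \sigma$), keeping track of the shifts $A^{\pm 1}$ and $e+1$, and verifying that the numerical threshold ``greater than three'' on the Kauffman bracket side corresponds precisely to ``greater than one'' on the Khovanov side.

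Given this separation, the connecting homomorphism $\delta\colon \mathcal{H}^{i,j-1}(L_{0}) \to \mathcal{H}^{i-e,j-3e-2}(L_{1})$ in (\ref{positive}) sends a position $i\in S_{0}$ to a position at distance exactly one from $S_{0}$ inside $S_{1}\{e+1\}$; the inequality forbids such a configuration, so this $\delta$ vanishes, and the same argument applies to the other occurrence of $\delta$ in (\ref{positive}). Hence (\ref{positive}) splits into short exact sequences
\[
0 \to \mathcal{H}^{i-e-1,j-3e-2}(L_{1}) \to \mathcal{H}^{i,j}(L) \to \mathcal{H}^{i,j-1}(L_{0}) \to 0,
\]
which split over $\mathbb{Q}$. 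Summing over $j$ gives $\mathcal{H}^{i}(L) \cong \mathcal{H}^{i}(L_{0}) \oplus \mathcal{H}^{i-e-1}(L_{1})$ up to a quantum shift; since $S_{0}$ and $S_{1}\{e+1\}$ are disjoint, at most one summand is nonzero at each $i$, yielding the claimed case-by-case formula.
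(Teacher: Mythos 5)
Your overall strategy matches the paper's: translate the hypothesized gap between $A\langle L_0\rangle$ and $A^{-1}\langle L_1\rangle$ into a separation in differential grading, observe that this kills the connecting homomorphism in the long exact sequence (\ref{positive}), and read off the decomposition of $\mathcal{H}^i(L)$. The final step is fine and essentially equivalent to the paper's (the paper reads the isomorphisms directly off the long exact sequence rather than invoking a splitting, but over $\mathbb{Q}$ the two are the same).

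The problem is that the proposal defers the entire load-bearing step. You write that the arithmetic ``should become the required inequality $|i_1-i_0|>1$'' and explicitly flag it as ``where I expect the main obstacle to lie,'' but you never carry it out. That computation \emph{is} the proof of the first claim; without it the statement is unsupported. The paper's argument is also more careful than the ``affine bijection'' you invoke: it does not claim a dictionary between all of $S_0$ (resp.\ $S_1$) and the $A$-support of $\langle L_0\rangle$ (resp.\ $\langle L_1\rangle$). Because of $H$-thinness each quantum degree can carry contributions from two adjacent differential degrees, so the map $i\mapsto j$ is two-to-one in general; a clean bijection between $i$-supports and $A$-supports is not immediate from Proposition \ref{breadth2case1} alone. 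What the paper actually uses is the much weaker fact (proved in Corollary \ref{breadth1}) that the \emph{extremal} entry of $\mathcal{H}(L_0)$ sits on the upper diagonal and the extremal entry of $\mathcal{H}(L_1)$ sits on the lower diagonal, each being the sole entry in its row. Combining those two facts with Lemma \ref{signature} and the explicit rewriting of the Kauffman bracket skein relation into the shifted Euler characteristics of $L_0$ and $L_1$, it then derives the inequality $2(i_{\min}+e+1-i_{\max})\ge 4$ directly from $\maxdeg\bigl(\sum (-1)^iq^j\dim\mathcal{H}^{i,j-1}(L_0)\bigr)<\mindeg\bigl(\sum (-1)^iq^j\dim\mathcal{H}^{i-e-1,j-3e-2}(L_1)\bigr)$. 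You should supply this chain explicitly: identify the extremal $(i_{\max},j_{\max})$ of $\mathcal{H}(L_0)$ and $(i_{\min},j_{\min})$ of $\mathcal{H}(L_1)$, express each in terms of $\sigma(L)$ via Lemma \ref{signature}, and subtract, using the $A$-gap to bound $j_{\min}+3e+2-(j_{\max}+1)>0$. As written, the proposal has the right blueprint but stops short of the computation that makes it a proof.
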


\begin{proof}
If there is a gap between the two polynomials $A\langle L_{0}\rangle$ and
$A^{-1}\langle L_{1} \rangle$ of length bigger than three, then either $M =
\maxdeg(A\langle L_{0}\rangle)< \mindeg(A^{-1}\langle L_{1}\rangle) = m$ or
$M = \maxdeg(A^{-1}\langle L_{1}\rangle) < \mindeg(A\langle L_{0}\rangle)= m$
such that the monomials $A^{M+1}, A^{M+1}, \ldots, A^{M+s}$ have zero coefficients
in both polynomials with $s$ at least seven as the powers of the monomials of
$\langle L\rangle$ differ by multiples of four. We discuss the first case and
the second case follows using a similar argument. Notice that we have

\begin{align*}
\left((-A)^{-3w(L)}(-A^{-2}-A^{2})\left\langle L \right\rangle\right)
& = (-t^{-1/2}-t^{1/2})V_{L}(t)_{t^{1/2}=-q}  = (q+q^{-1})V_{L}(t)_{t^{1/2}=-q}\\
& = \sum_{i,j\in\mathbb{Z}}(-1)^{i}q^{j}\dim\mathcal{H}^{i,j}(L).
%\sum_{i,j\in\mathbb{Z}}(-1)^{i-x(L)}q^{j-2x(L)+y(L)}\dim\mathcal{\overline{H}}^{i,j}(L)
\end{align*}

In addition,  since  $x(L_{0}) = x(L)$ and $ y(L_{0}) = y(L) -1$, we obtain $w(L_{0}) =
w(L) -1$ and

\begin{align*}
\left(A(-A)^{-3w(L)}(-A^{-2}-A^{2})\left\langle L_{0} \right\rangle\right)%_{A^{-2}=t^{1/2}=-q}
& = \left(A(-A)^{-3(w(L_{0})+1)}(-A^{-2}-A^{2})\left\langle L_{0}
\right\rangle\right)_{A^{-2}=t^{1/2}=-q}\\ & = -t^{1/2}(-t^{-1/2}-t^{1/2})
V_{L_{0}}(t)_{t^{1/2}=-q}  \\ & = q(q+q^{-1})V_{L_{0}}(t)_{t^{1/2}=-q}\\
& = \sum_{i,j\in\mathbb{Z}}(-1)^{i}q^{j+1}
\dim\mathcal{H}^{i,j}(L_{0})\\
%\sum_{i,j\in\mathbb{Z}}(-1)^{i-x(L_{0})}q^{j-2x(L_{0})+y(L_{0})+1}
%\dim\mathcal{\overline{H}}^{i,j}(L_{0})\\
& = \sum_{i,j\in\mathbb{Z}}(-1)^{i}
q^{j}\dim\mathcal{H}^{i,j-1}(L_{0}).
\end{align*}

Moreover, from facts that $x(L_{1}) = x(L)+e$ and $ y(L_{1}) = y(L) -e -1$,
we obtain $w(L_{1}) = w(L) -2e-1$ where $e$ is as defined in Lemma \ref{signature} and

\begin{align*}
\left(A^{-1}(-A)^{-3w(L)}(-A^{-2}-A^{2})\left\langle L_{1} \right\rangle\right)
%_{A^{-2}=t^{1/2}=-q}
& = \left(A^{-1}(-A)^{-3(w(L_{1})+2e+1)}(-A^{-2}-A^{2})
\left\langle L_{1} \right\rangle\right)_{A^{-2}=t^{1/2}=-q}\\ & =
-t^{\frac{3e+2}{2}}(-t^{-1/2}-t^{1/2})V_{L_{1}}(t)_{t^{1/2}=-q}\\
&  = (-1)^{3e-1}q^{3e+2}(q+q^{-1})V_{L_{1}}(t)_{t^{1/2}=-q}\\ & =
\sum_{i,j\in\mathbb{Z}}(-1)^{i-e-1}q^{j+3e+2}
\dim\mathcal{H}^{i,j}(L_{1})\\
%\sum_{i,j\in\mathbb{Z}}(-1)^{i-x(L_{1})+3e-1}q^{j-2x(L_{1})+y(L_{1})+3e+2}
%\dim\mathcal{\overline{H}}^{i,j}(L_{1})\\
& = \sum_{i,j\in\mathbb{Z}}(-1)^{i}q^{j}
\dim\mathcal{H}^{i-e-1,j-3e-2}(L_{1}).
%& = \sum_{i,j\in\mathbb{Z}}(-1)^{i-x(L)}q^{j-2x(L)+y(L)}
%\dim\mathcal{\overline{H}}^{i,j}(L_{1})[-1]\{-1\}.
\end{align*}

As a direct consequence of the previous argument and the fact that $\langle L \rangle = A\langle L_{0}\rangle + A^{-1}\langle L_{1} \rangle$, we obtain the following   equality that can be also confirmed from the fact that the Euler characteristic of any long exact sequence vanishes
\begin{align*}
\sum_{i,j\in\mathbb{Z}}(-1)^{i}q^{j}\dim\mathcal{H}^{i,j}(L)
 = \sum_{i,j\in\mathbb{Z}}(-1)^{i}q^{j}
\dim\mathcal{H}^{i,j-1}(L_{0}) +  \sum_{i,j\in\mathbb{Z}}(-1)^{i}
q^{j}\dim\mathcal{H}^{i-e-1,j-3e-2}(L_{1}),
\end{align*}
with
\begin{equation}\label{onestep}
\maxdeg\left(\sum_{i,j\in\mathbb{Z}}(-1)^{i}q^{j}
\dim\mathcal{H}^{i,j-1}(L_{0})\right) < \mindeg\left(\sum_{i,j\in\mathbb{Z}}(-1)^{i}
q^{j}\dim\mathcal{H}^{i-e-1,j-3e-2}(L_{1})\right).
\end{equation}

Now, if $(i_{\max},j_{\max})$ are the coordinates
of the most right-top nonzero entry of $\mathcal{H}(L_{0})$ that is part of the
upper diagonal as pointed out in the proof
of Corollary \ref{breadth1}, then we obtain $\mathcal{H}^{i_{\max},j_{\max}+1}(L) \cong
\mathcal{H}^{i_{\max},j_{\max}}(L_{0})$ from the long exact sequence in Equation \ref{positive}.
The last result follows because $\dim \mathcal{H}^{i_{\max}-e-1,j_{\max}-3e-1}(L_{1}) = 0 = \dim \mathcal{H}^{i_{\max}-e,j_{\max}-3e-1}(L_{1})$ as they are the coefficients of $q^{j_{\max}+1}$ in $\sum_{i,j\in\mathbb{Z}}(-1)^{i}
q^{j}\dim\mathcal{H}^{i-e-1,j-3e-2}(L_{1})$ with

\begin{align*}
\mindeg\left(\sum_{i,j\in\mathbb{Z}}(-1)^{i}
q^{j}\dim\mathcal{H}^{i-e-1,j-3e-2}(L_{1})\right) & > \maxdeg\left(\sum_{i,j\in\mathbb{Z}}(-1)^{i}q^{j}
\dim\mathcal{H}^{i,j-1}(L_{0})\right)\\ & = j_{\max}+1.
\end{align*}

As a result of the fact $j_{\max} -2i_{\max}
= -\sigma(L_{0}) + 1$, we obtain $j_{\max} + 1 - 2i_{\max} = j_{\max} - 2i_{\max} + 1
= -\sigma(L_{0}) + 1 + 1 = -(\sigma(L_{0}) - 1) + 1 = -\sigma(L) +1$. In other words, the last
statement is equivalent to say that the above nonzero entry of the Khovanov homology of $L$ with coordinates
$(i_{\max},j_{\max}+1)$ will also lie on the upper diagonal of the Khovanov homology of $L$.

Similarly, if $(i_{\min},j_{\min})$ are the coordinates
of the most left-bottom nonzero entry of $\mathcal{H}(L_{1})$ that is part of the
lower diagonal as pointed in the proof
of Corollary \ref{breadth1}, then we obtain $\mathcal{H}^{i_{\min}+e+1,j_{\min}+3e+2}(L) \cong
\mathcal{H}^{i_{\min},j_{\min}}(L_{1})$ from the long exact sequence in Equation \ref{positive}.
The last result follows because $\dim \mathcal{H}^{i_{\min}+e,j_{\min}+3e+1}(L_{0}) = 0 = \dim \mathcal{H}^{i_{\min}+e+1,j_{\min}+3e+1}(L_{0})$ as they are the coefficients of $q^{j_{\min}+3e+2}$ in $\sum_{i,j\in\mathbb{Z}}(-1)^{i}
q^{j}\dim\mathcal{H}^{i,j-1}(L_{0})$ with

\begin{align*}
 j_{\min} +3e+2 & = \mindeg\left(\sum_{i,j\in\mathbb{Z}}(-1)^{i}
q^{j}\dim\mathcal{H}^{i-e-1,j-3e-2}(L_{1})\right)\\
& > \maxdeg\left(\sum_{i,j\in\mathbb{Z}}(-1)^{i}q^{j}
\dim\mathcal{H}^{i,j-1}(L_{0})\right).
\end{align*}

Since $j_{\min} -2i_{\min}
= -\sigma(L_{1}) + 1$, we obtain $j_{\min} + 3e+ 2 - 2(i_{\min}  + e+1) = j_{\min} - 2i_{\min} + e
= -\sigma(L_{1}) + e - 1 = -(\sigma(L_{1}) - e) - 1 = -\sigma(L) - 1$. In other words, the last
statement is equivalent to say that the above nonzero entry of the Khovanov homology of $L$ with coordinates
$(i_{\max}+e+1,j_{\max}+3e+2)$ will also lie on the lower diagonal of the Khovanov homology of $L$.

Thus, we conclude that $2(i_{\min}+e+1-i_{\max}) = j_{\min}+3e+2 + \sigma(L) + 1 - j_{\max} -\sigma(L) +1 = j_{\min} + 3e+ 2-j_{\max} + 2 \geq 2 + 2 = 4$, where the inequality follows from the fact that $j_{\min} + 3e +2-j_{\max} - 1> 0$ that is obtained from Equation \ref{onestep}. The last inequality implies that $i_{\min} + e+1 \geq i_{max} + 2$ and hence we conclude that $|i_{1}-i_{0}|\geq |(i_{\min} + e+1) - i_{max}| \geq 2$ % choose $k$ such that $i_{\max} < k < i_{\min}+e+1$. Therefore, we obtain that $i_{0} < k < i_{i}$
for any $i_{0} \in S_{0}$ and $i_{1} \in S_{1}\{e+1\}$ as a result of  $i_{0} \leq i_{\max}$ and $ i_{\min}+e+1 \leq i_{1}$.

To compute the Khovanov homology of the link $L$, we need to consider only $i \in S_{0} \cup S_{1}\{e+1\}$ because otherwise we obtain $ 0 = \mathcal{H}^{i-e-1}(L_{1})\rightarrow \mathcal{H}^{i}(L)\rightarrow
\mathcal{H}^{i}(L_{0}) = 0$ from the long exact sequence and this implies that $\mathcal{H}^{i}(L) = 0$. The other two cases
are either $i \in S_{0}$ or $i \in S_{1}\{e+1\}$ but not both since the two sets $S_{0}$ and $S_{1}\{e+1\}$ are disjoint. We discuss the first case and the other case follows using a similar argument. If $i \in S_{0}$, then $i+e+1, i + e \notin S_{1}$ as a result of what we just proved.
Hence the result follows directly from the long exact sequence $ 0 = \mathcal{H}^{i-e-1}(L_{1})\rightarrow \mathcal{H}^{i}(L)\rightarrow
\mathcal{H}^{i}(L_{0}) \rightarrow \mathcal{H}^{i-e-1}(L_{1}) = 0$.

\end{proof}

%The above corollary also can be obtained a result of the vanishing of the Euler characteristic of the long exact sequence \ref{mainsequence}.
\begin{coro}\label{leehomology}
If there is a gap between the two polynomials $A\langle L_{0}\rangle$ and
$A^{-1}\langle L_{1} \rangle$ of length bigger than three, then we have
\[  H^{i}(L) \cong \begin{cases}
{H^{i}(L_{0})},& \mbox{if} \; i \in S_{0}, \vspace{1pt} \\
{H^{i-e-1}(L_{1})}, & \mbox{if} \;  i \in S_{1}\{e+1\} \vspace{1pt},\\
{0}, & \mbox{otherwise}.
\end{cases}\]
\end{coro}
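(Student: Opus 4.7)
The plan is to transfer the Khovanov argument of Proposition \ref{basic} to Lee homology, by invoking two ingredients: the long exact sequence for Lee homology associated with the triple $(L, L_0, L_1)$, and the fact that for an $H$-thin link the differential support of the Lee homology is contained in that of the Khovanov homology.

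First, I would write down the Lee analog of the long exact sequence \ref{positive}. The short exact sequence of chain complexes
\[
0 \to \mathcal{C}(L_1)[-1]\{-1\} \to \mathcal{C}(L) \to \mathcal{C}(L_0) \to 0
\]
is compatible with the Lee differential as well as the Khovanov one (this is what is invoked in the proof of Theorem \ref{grading} via \cite[Proof of Theorem\,4.2]{L}), so after applying the same writhe-based shifts used to pass from \ref{mainsequence} to \ref{positive} one obtains
\[
\cdots \to H^{i-1}(L_0) \to H^{i-e-1}(L_1) \to H^i(L) \to H^i(L_0) \to H^{i-e}(L_1) \to H^{i+1}(L) \to \cdots.
\]
Moreover, Part 3 of Proposition \ref{lee} presents $H(L_k)$ as a subquotient of $\mathcal{H}(L_k)$ for $k=0,1$, so the differential support of $H(L_k)$ is contained in $S_k$. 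Thus vanishing of a Khovanov group on the right-hand side of this LES entails vanishing of the corresponding Lee group.

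Next, I would plug in Proposition \ref{basic}: $|i_1 - i_0| \geq 2$ for every $i_0 \in S_0$ and $i_1 \in S_1\{e+1\}$. Applied twice, this distance bound gives two useful consequences. If $i \in S_0$, then neither $i$ nor $i+1$ lies in $S_1\{e+1\}$, so neither $i-e-1$ nor $i-e$ lies in $S_1$, which forces $H^{i-e-1}(L_1) = H^{i-e}(L_1) = 0$. Symmetrically, if $i \in S_1\{e+1\}$, then neither $i-1$ nor $i$ lies in $S_0$, which forces $H^{i-1}(L_0) = H^i(L_0) = 0$.

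Finally, I would read off the three cases directly from the Lee LES. When $i \in S_0$ the relevant segment collapses to $0 \to H^i(L) \to H^i(L_0) \to 0$, giving $H^i(L) \cong H^i(L_0)$. When $i \in S_1\{e+1\}$ it collapses to $0 \to H^{i-e-1}(L_1) \to H^i(L) \to 0$, giving $H^i(L) \cong H^{i-e-1}(L_1)$. When $i$ lies in neither of these sets, both $H^{i-e-1}(L_1)$ and $H^i(L_0)$ vanish, and exactness at $H^i(L)$ forces $H^i(L) = 0$. The only nontrivial point is the assertion of the Lee long exact sequence with the required shifts, but that is precisely the sequence already used in the proof of Theorem \ref{grading}, so no additional input is needed.
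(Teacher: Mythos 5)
Your proof is correct, but it follows a genuinely different route from the paper's. The paper argues at the level of spectral sequences: it asserts that, under the gap hypothesis, the $E_{\infty}$-page of the Lee spectral sequence of the double complex for $L$ decomposes as the disjoint union of the $E_{\infty}$-pages for $L_{0}$ and $L_{1}$, from which the statement follows immediately. You instead run the Lee analog of the Khovanov long exact sequence \ref{positive} (the ungraded version of Lee's sequence already invoked in the proof of Theorem \ref{grading}) and kill the unwanted terms using Proposition \ref{basic}'s separation bound $|i_{1}-i_{0}|>1$ together with the observation that each $H^{i}(L_{k})$, being a subquotient of $\mathcal{H}^{i}(L_{k})$, has differential support contained in $S_{k}$. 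Your use of the bound twice (to kill both $H^{i-e-1}(L_{1})$ and $H^{i-e}(L_{1})$ when $i\in S_{0}$, and both $H^{i-1}(L_{0})$ and $H^{i}(L_{0})$ when $i\in S_{1}\{e+1\}$) is exactly what is needed, and that is why the full strength of Proposition \ref{basic}, not merely disjointness, is used. Your argument is more explicit and parallels the mechanics of Proposition \ref{basic} itself, whereas the paper's spectral-sequence decomposition is more conceptual but stated with essentially no detail; both are legitimate and reach the same conclusion.
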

\begin{proof}
The $E_{\infty}$-page of the spectral sequence of the double complex $(\overline{\mathcal{C}}(L), d(L), \Phi(L))$  is simply the disjoint union
of the $E_{\infty}$-pages of the spectral sequences for the double complexes for the links $L_{0}$ and $L_{1}$. Thus the Lee homology of the link $L$ is simply the disjoint union of the Lee homologies of the links $L_{0}$ and $L_{1}$.
\end{proof}

\begin{coro}\label{resultofbasic1}
If $l_{0}\neq l-1$ or $l_{1}\neq l-1$ where $l,l_{0}$ and $l_{1}$ are the number of components of $L,L_{0}$ and  $L_{1}$  respectively and if there is a gap between the two polynomials
$A\langle L_{0}\rangle$ and $A^{-1}\langle L_{1} \rangle$, then this gap has to be of length
three.
\end{coro}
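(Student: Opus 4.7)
I would argue by contraposition: assuming the gap between $A\langle L_{0}\rangle$ and $A^{-1}\langle L_{1}\rangle$ has length strictly bigger than three, I would deduce that $l_{0} = l_{1} = l-1$, contradicting the standing hypothesis. The assumption is precisely what is needed to invoke Proposition \ref{basic}, which exhibits $\mathcal{H}(L)$ as the direct sum of $\mathcal{H}(L_{0})$ (indexed by $S_{0}$) and an appropriate shift of $\mathcal{H}(L_{1})$ (indexed by $S_{1}\{e+1\}$), with $S_{0}$ and $S_{1}\{e+1\}$ disjoint. Corollary \ref{leehomology} then promotes this decomposition to the Lee level, giving
\[
H(L) \;\cong\; H(L_{0})\,\oplus\,H(L_{1})[e+1]
\]
as graded $\mathbb{Q}$-vector spaces, and in particular $\dim_{\mathbb{Q}} H(L) = \dim_{\mathbb{Q}} H(L_{0}) + \dim_{\mathbb{Q}} H(L_{1})$.

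The next step is to feed this equality into Part 1 of Proposition \ref{lee}, which identifies the total rational Lee homology of an $n$-component link with $2^{n}$. This turns the dimension identity into the purely numerical statement
\[
2^{l} \;=\; 2^{l_{0}} + 2^{l_{1}}.
\]
The elementary fact that $2^{a}+2^{b}=2^{c}$ in non-negative integers forces $a=b=c-1$ then yields $l_{0}=l_{1}=l-1$, contradicting the assumption that at least one of $l_{0},l_{1}$ differs from $l-1$. Hence the gap cannot be of length greater than three. Since the exponents appearing in $A\langle L_{0}\rangle$ and in $A^{-1}\langle L_{1}\rangle$ share a common parity (that dictated by the crossing number of $L$), any actual gap must have odd length, and under the assumption that a gap exists the only remaining value is exactly three.

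The main, and essentially only, obstacle is making sure that the grading shifts in Proposition \ref{basic} and Corollary \ref{leehomology} are lined up so that the two contributions to $H(L)$ really are disjoint and that no cancellation occurs when taking dimensions; this is exactly what the disjointness of $S_{0}$ and $S_{1}\{e+1\}$ guarantees. Once that compatibility is in place, the remainder reduces to the trivial arithmetic of powers of two.
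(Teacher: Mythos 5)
Your approach is essentially the paper's own: invoke Corollary \ref{leehomology} (which rests on the disjointness of $S_{0}$ and $S_{1}\{e+1\}$ from Proposition \ref{basic}) to get $\dim H(L) = \dim H(L_{0}) + \dim H(L_{1})$, then Part 1 of Proposition \ref{lee} turns this into $2^{l} = 2^{l_{0}} + 2^{l_{1}}$, which forces $l_{0} = l_{1} = l - 1$, contradicting the hypothesis. That part is exactly right.

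The one imprecision is in your closing sentence. You assert that the exponents of $A\langle L_{0}\rangle$ and $A^{-1}\langle L_{1}\rangle$ share a common \emph{parity}, hence any gap must have \emph{odd} length, hence (being at most three) it is exactly three. But ``odd and at most three'' still leaves the value $1$ open, so your stated chain does not close. The correct observation — which the paper uses in the proof of Proposition \ref{basic} — is that the exponents of $\langle L\rangle = A\langle L_{0}\rangle + A^{-1}\langle L_{1}\rangle$ all lie in a single residue class modulo $4$ (since there is no cancellation, this is inherited by each summand), so the gap length is necessarily $\equiv 3 \pmod 4$. Combined with ``at most three,'' this forces the gap to be exactly three. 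Replace ``parity/odd'' with ``residue class mod $4$ / $\equiv 3 \pmod 4$'' and the argument is complete and matches the paper.
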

\begin{proof}
If the gap is of length bigger than three, then the results of Corollary \ref{leehomology} and Part 1 of Proposition \ref{lee} yields $2^{l} = \dim H(L) = \dim H(L_{0}) + \dim H(L_{1}) = 2^{l_{0}} + 2^{l_{1}}$.
Obviously, this  holds only if  $l_{0} = l-1 = l_{1}$.
\end{proof}

The following lemma is well-known about the Jones polynomial and it can be obtained by an argument similar to the proof of Proposition \ref{basic}.
\begin{lemm}\label{jonespolynomial}
The Jones polynomial of the link $L$ at the crossing $c$ satisfies one of the following skein relations:
\begin{enumerate}
\item If $c$ is a positive crossing of type I, then
$ V_{L}(t) = -t^{\frac{1}{2}}V_{L_{0}}(t)-t^{\frac{3e}{2}+1}V_{L_{1}}(t)$.
\item If $c$ is a negative crossing and of type I, then
$ V_{L}(t) = -t^{\frac{3e}{2}-1}V_{L_{0}}(t)-t^{\frac{-1}{2}}V_{L_{1}}(t)$.
\item If $c$ is a positive crossing and of type II, then
$ V_{L}(t) = -t^{\frac{3e}{2}+1}V_{L_{0}}(t)-t^{\frac{1}{2}}V_{L_{1}}(t)$.
\item If $c$ is a negative crossing of type II, then
$ V_{L}(t) = -t^{\frac{-1}{2}}V_{L_{0}}(t)-t^{\frac{3e}{2}-1}V_{L_{1}}(t)$.
\end{enumerate}
\end{lemm}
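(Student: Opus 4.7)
The plan is to mirror, in each of the four configurations, the Kauffman-bracket computation already carried out in the proof of Proposition \ref{basic}. The input is the Kauffman skein relation together with the writhe-normalized definition $V_L(t) = \bigl((-A)^{-3w(L)}\langle L\rangle\bigr)_{A^{-2}=t^{1/2}}$; the only data that change from case to case are (i) whether the $A$-smoothing of the crossing $c$ is $L_0$ or $L_1$ (which depends jointly on the sign of $c$ and on whether $c$ is of type I or II), and (ii) the writhe shifts $w(L)-w(L_0)$ and $w(L)-w(L_1)$, which are controlled by the sign of $c$ and by the integer $e$.

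For Case 1, the identity is literally extracted from the displayed computation inside the proof of Proposition \ref{basic}: starting from $\langle L\rangle = A\langle L_0\rangle + A^{-1}\langle L_1\rangle$, multiplying by $(-A)^{-3w(L)}$, using $w(L_0)=w(L)-1$ and $w(L_1)=w(L)-2e-1$, and then substituting $A^{-2}=t^{1/2}$ (equivalently $q=-t^{1/2}$ in the displayed formula), one arrives directly at $V_L(t) = -t^{1/2}V_{L_0}(t) - t^{3e/2+1}V_{L_1}(t)$, so in this case there is literally nothing new to prove.

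For the remaining three cases I would carry out exactly the same one-line manipulation with two purely local modifications. When $c$ is negative (Cases 2 and 4), the writhe-shift contribution coming from $c$ itself changes sign (removing a negative crossing increases the writhe rather than decreasing it), which replaces the prefactor $-t^{1/2}$ attached to $V_{L_0}$ in Case 1 by $-t^{-1/2}$ and similarly adjusts the $V_{L_1}$ prefactor. When $c$ is of type II (Cases 3 and 4), the role of the two smoothings in the Kauffman skein relation is swapped, i.e. it becomes $\langle L\rangle = A\langle L_1\rangle + A^{-1}\langle L_0\rangle$; this interchanges the prefactors of $V_{L_0}$ and $V_{L_1}$ relative to Cases 1 and 2. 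Combining the sign modification with the type modification in all four ways produces precisely the four identities in the statement.

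I expect no genuine obstacle here: the lemma is classical and the derivation is mechanical. The only step that demands care is the simultaneous bookkeeping of the sign of $c$ and the type of $c$, both when matching $\{L_0,L_1\}$ to the $\{A,A^{-1}\}$-smoothings and when computing the writhe shifts in terms of $e$. Once the orientation conventions of Figures \ref{figure} and \ref{Diagram1} are fixed, each of the four cases reduces to the same single line of algebra already displayed for Case 1.
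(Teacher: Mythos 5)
Your overall plan is exactly the paper's: the paper itself offers no detail beyond remarking that the lemma ``can be obtained by an argument similar to the proof of Proposition \ref{basic},'' and Case~1 is indeed literally contained in the displayed computation there. Your ``type II flip'' (swapping the two terms of the Kauffman relation) is also correct. However, the recipe you give for the negative-crossing cases does not reproduce the stated formulas, and the error is an orientation mismatch. In Case~1 the simple prefactor $-t^{1/2}$ attaches to $V_{L_0}$ because for a positive type-I crossing the $A$-smoothing $L_0$ is the \emph{oriented} resolution: it inherits an orientation from $L$ and its writhe differs from $w(L)$ by exactly $1$, while $L_1$ must be re-oriented, which is what produces the $e$-dependent exponent $\frac{3e}{2}+1$. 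When $c$ becomes negative but stays type I, the $A$-smoothing $L_0$ is now the \emph{disoriented} resolution and $L_1$ the oriented one; the simple prefactor (now $-t^{-1/2}$) therefore migrates to $V_{L_1}$, and the $e$-dependent factor $-t^{\frac{3e}{2}-1}$ lands on $V_{L_0}$ --- exactly the opposite of what your description predicts, and what the lemma actually asserts in Case~2. Applying your (correct) type-II swap to this corrected Case~2 also fixes Case~4.

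In short, passing from a positive to a negative $c$ is not just ``flip the sign of the writhe shift on $V_{L_0}$'': it simultaneously exchanges which of $L_0, L_1$ plays the role of the oriented resolution, and hence which term carries the $e$-dependent exponent and which carries the bare $\pm\frac{1}{2}$. You yourself flag ``the simultaneous bookkeeping of the sign of $c$ and the type of $c$'' as the one step that demands care, and that is precisely the point at which the proposal as written goes astray; once the oriented/disoriented dichotomy is tracked alongside the $A/B$-smoothing dichotomy, the four identities come out as stated.
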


\begin{prop}\label{new}
Let $L$ be a link and $c$ be a crossing of this link consisting of two
arcs of two different components. Then the gap between the polynomials $A\langle L_{0}\rangle$ and $A^{-1}\langle L_{1}\rangle$ is of length at most seven.
\end{prop}
\begin{proof}
We assume that $c$ is a crossing between the two components $L_{1}$ and $L_{2}$ of the link $L$. We use induction on the length of the sequence of crossing switches required to turn the sublink of $L$ consisting of the components $L_{1}$ and $L_{2}$ into disjoint union of two links.

This sequence is a subset of the set of all crossings between the two components $L_{1}$ and $L_{2}$ of the link $L$. Each such crossing in this set consists of two arcs one coming from the component $L_{1}$ and the other one from $L_{2}$. Therefore, this set can be written as a disjoint union of two subsets one contains all crossings with the arc from $L_{1}$ is above the arc from $L_{2}$ and the other way around for the second one. A choice of the above sequence can be made to be either one of these two subsets among many other possible choices. Without loss of generality, we can assume that the crossing $c$ is the first element in this sequence and the link $K$ is obtained from $L$ by switching the crossing $c$. Therefore, the two links $L$ and $K$ are identical except at the crossing $c$.

From the induction hypothesis, the result holds for the link $K$ at the crossing $c$. In particular, the gap between the polynomials $A^{-1}\langle K_{0}\rangle$ and $A\langle K_{1}\rangle$ is of length at most seven. The result follows directly if this gap is of length less than or equal to three or if $\mindeg(A\langle K_{1}\rangle) >  \maxdeg(A^{-1}\langle K_{0}\rangle)$ noting that the links $K_{0}$ and $L_{0}$ are identical and the links $K_{1}$ and $L_{1}$ are also identical. Thus we can assume that this gap is of length exactly seven and $\mindeg(A^{-1}\langle K_{0}\rangle) >  \maxdeg(A\langle K_{1}\rangle)$.

The choice of the skein relation in Lemma \ref{jonespolynomial} that we can apply at the crossing $c$ to evaluate $V_{L}(t)$ or $V_{K}(t)$ depends on the type of the crossing $c$ and the orientation of the two components $L_{1}$ and $L_{2}$ %since the orientation on the other components has no effect on determining the type of the crossing $c$ and its orientation
either in $K$ or $L$. The fact that the Jones polynomials of the same link with two different orientations are related by some phase gives us the freedom to choose the orientations on the two components $L_{1}$ and $L_{2}$ without affecting the length of the gap. Without loss of generality and choosing the appropriate orientations on the components $L_{1}$ and $L_{2}$, we can assume that the crossing $c$ is positive in the link $L$ and negative in the link $K$.

As a result of the assumption $\mindeg(A^{-1}\langle K_{0}\rangle) >  \maxdeg(A\langle K_{1}\rangle)$, we can conclude that $\mindeg(V_{L_{0}}(t))> \maxdeg (t^{\frac{3e}{2}} V_{L_{1}}(t))$. %In this case, we choose orientations of the components $L_{1}$ and $L_{2}$ in a way that $c$ is positive of type I in $L$ and negative of type II in $K$.
In this case, the gap between the polynomials $-t^{\frac{-1}{2}}V_{L_{0}}(t)$ and $-t^{\frac{3e}{2}-1} V_{L_{1}}(t)$ in the link $K$ is of length $ (\mindeg(V_{L_{0}}(t))-\frac{1}{2})-(\maxdeg(V_{L_{1}}(t))-1)-1 = \mindeg(V_{L_{0}}(t))-\maxdeg(V_{L_{1}}(t))- \frac{1}{2}$  and the gap between the polynomials $-t^{\frac{1}{2}}V_{L_{0}}(t)$ and $-t^{\frac{3e}{2}+1} V_{L_{1}}(t)$ in the link $L$ is of length $(\mindeg(V_{L_{0}}(t)) +\frac{1}{2})-(\maxdeg(V_{L_{1}}(t))+1)-1 = \mindeg(V_{L_{0}}(t))-\maxdeg(V_{L_{1}}(t))- \frac{3}{2}$. Thus the result follows since the length of the gap in $L$ is smaller than the length of the gap in the link $K$.

\end{proof}

As a consequence of Corollary \ref{resultofbasic1} and Proposition \ref{new}, we obtain the following Corollary:
\begin{coro}
Let $L$ be a link. If there is a gap between the
two polynomials $A\langle L_{0}\rangle$ and $A^{-1}\langle L_{1} \rangle$, then
it has to be of length at most seven.
\end{coro}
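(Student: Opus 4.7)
The plan is to dispose of the result by a simple dichotomy based on the nature of the crossing $c$ used to form the smoothings $L_0$ and $L_1$, and then to invoke the two results cited in the statement of the corollary. Concretely, I would split into the case where $c$ is a self-crossing (both of its arcs belong to the same component of $L$) and the case where $c$ is a mixed crossing (its two arcs belong to two distinct components of $L$).

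The first step is the elementary component-count observation for the two smoothings at $c$. If $c$ is a self-crossing, then exactly one of the smoothings splits the component carrying $c$ into two, while the other leaves the component count unchanged; hence $\{l_0,l_1\}=\{l,l+1\}$, and in particular neither $l_0$ nor $l_1$ can equal $l-1$. If instead $c$ is a mixed crossing between two distinct components, then both smoothings fuse the two components into one, so $l_0=l_1=l-1$.

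With this dichotomy in hand, the two cases of the proof are immediate. In the self-crossing case, the hypothesis $l_0\neq l-1$ or $l_1\neq l-1$ of Corollary \ref{resultofbasic1} is satisfied, and hence any gap between $A\langle L_0\rangle$ and $A^{-1}\langle L_1\rangle$ must have length exactly three, which is certainly at most seven. In the mixed-crossing case, Proposition \ref{new} applies directly and forces the length of the gap to be at most seven. Combining the two cases gives the claimed bound.

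Since the entire argument is a bookkeeping invocation of the two earlier results, I do not expect any genuine obstacle. The only content beyond those results is the elementary component-count observation above, which is standard and requires no intricate argument.
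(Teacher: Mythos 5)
Your proposal is correct and is precisely the reasoning the paper intends when it writes that the corollary is ``a consequence of Corollary~\ref{resultofbasic1} and Proposition~\ref{new}''; you have simply made explicit the self-crossing/mixed-crossing dichotomy and the elementary component-count observation $\{l_0,l_1\}=\{l,l+1\}$ in the self-crossing case that the paper leaves implicit.
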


\begin{proof}[\textbf{Proof of Theorem \ref{main}}]
We use induction on the determinant of the given quasi-alternating link to prove the
statement of the theorem. If $\det(L)=1$, then the result holds since the only
quasi-alternating link of determinant one is the unknot \cite[Prop.\,3.2]{G2}.
Now, suppose that the result holds for all quasi-alternating links of determinant
less than the determinant of the link $L$. In particular, any gap in the differential gradings of
$\mathcal{H}(L_{0})$ and $\mathcal{H}(L_{1})$ is of length one.

We have two cases to consider. The first case if there is a gap between the
polynomials $A\langle L_{0}\rangle$ and $A^{-1}\langle L_{1} \rangle$ of length
seven or less and this includes the case of having  no gap. The second case corresponds
to  a gap  of length bigger  than seven. In any case, there is no cancellation
between these two polynomials as a result of $L$ being quasi-alternating. Thus
any gap in the Jones polynomial of the link $L$  is induced either by a gap in one
of the two polynomials of $L_0$ and $L_1$ or by  the gap between these two polynomials.

In the first case,  it is easy to see that the Jones polynomial of $L$ has no gap
of length bigger than one as a result of the facts that the above polynomials have no
gap of length bigger than seven from the induction hypothesis on the links $L_{0}$
and $L_{1}$ and that the gap between them is of length seven or less.  Now the result
follows from Proposition  \ref{breadth2case1} since having a gap in the differential grading of
$\mathcal{H}(L)$ of length bigger than one induces a gap of length bigger than one in
the Jones polynomial of $L$ and this contradicts  what we know already about the
Jones polynomial of $L$.

In the second case, the gap between the two polynomials is of length bigger than seven.
In this case and according to Corollary \ref{resultofbasic1}, we can assume  that $L$ is a link
of more than one component that is quasi-alternating at a crossing consisting of two
arcs of two different components with $l_{0} = l - 1 = l_{1}$. This is impossible according to Proposition
\ref{new} and hence the result follows.

\end{proof}
%%%%%%%%%%%%%%%%%%%%%%%%%%%%%%%%%%%%%%%%%%%%%%%%%%%%%%%%%%%%%%%%%%%%%
%%%%%%%%%%%%%%%%%%%%%%%%%%%%%%%%%%%%%%%%%%%%%%%%%%%%%%%%%%%%%%%%%%%%%
\section{Consequences of the Main Theorem}

In this section, we discuss some consequences and applications of Theorem \ref{main}. We start by stating  two  corollaries.
The first  of which establishes a weaker version of Conjecture 2.3 in \cite{CQ} and
the second one establishes a weaker version of Conjecture 3.8 in \cite{QC}.

\begin{coro}\label{basicnew}
If $L$ is a quasi-alternating link, then the length of any gap in the Jones
polynomial $V_{L}(t)$ is one.
\end{coro}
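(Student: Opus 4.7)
The plan is to derive this corollary as an immediate consequence of Theorem~\ref{main} combined with Proposition~\ref{breadth2case1}. Since Theorem~\ref{main} asserts that every gap in the differential grading of $\mathcal{H}(L)$ has length one whenever $L$ is quasi-alternating, and Proposition~\ref{breadth2case1} establishes the equivalence that $\mathcal{H}(L)$ has a gap of length $s$ in its differential grading if and only if $V_L(t)$ has a gap of length $s$, the conclusion about $V_L(t)$ follows automatically by matching the length parameter $s$ in both statements.

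More concretely, the argument I would write is: suppose for contradiction that $V_L(t)$ has a gap of length $s \geq 2$. Applying the ``if'' direction of Proposition~\ref{breadth2case1} in reverse (i.e.\ the implication from a Jones-polynomial gap to a differential-grading gap), we conclude that $\mathcal{H}(L)$ must have a gap of length exactly $s$ in its differential grading. This directly contradicts Theorem~\ref{main}, which forces every such gap to have length one. Therefore $s = 1$.

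Since both ingredients (Theorem~\ref{main} and Proposition~\ref{breadth2case1}) have already been proved in the previous section, there is no real technical obstacle in the proof of the corollary itself; it is purely a packaging step. The only subtlety worth flagging for the reader is that the equivalence in Proposition~\ref{breadth2case1} preserves the length $s$ of the gap exactly (not merely its existence), which is precisely the feature that is needed to transfer the ``length one'' bound from the Khovanov side to the Jones polynomial side without any loss.
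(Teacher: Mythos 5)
Your proof is correct and follows exactly the same route as the paper, which also cites Theorem~\ref{main} together with Proposition~\ref{breadth2case1} as the two ingredients. You have merely spelled out the contradiction argument that the paper leaves implicit.
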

\begin{proof}
The result follows easily by  combining  Theorem \ref{main} with   Proposition \ref{breadth2case1}.
\end{proof}

\begin{coro}
If $L$ is a quasi-alternating link, then $\left\lceil {\frac{\breadth V_{L}(t)}{2}}
\right\rceil + 1\leq \det(L)$.
\end{coro}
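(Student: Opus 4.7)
The plan is to bound $\det(L)=|V_L(-1)|$ from below by combining two facts about the Jones polynomial of a quasi-alternating link: the absence of long gaps, together with a specific sign-alternation pattern among its coefficients.

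First I would establish the sign pattern. Writing $V_L(t)=\sum_m c_m t^m$, I claim that there is a fixed $s\in\{\pm 1\}$ with $s\cdot(-1)^m c_m\geq 0$ for every exponent $m$ appearing. This follows from Lemma~\ref{coeff} after the substitution $q=-\sqrt{t}$: the $a$-term contribution to $b_j$ has sign $(-1)^{l+i_{\min}}$ and flips each time $j$ increases by $2$, while the Lee-homology term $\tfrac{1}{2}\dim H^{i_{\min}+l}(L)$ is always nonnegative. The key point noted in the proof of Lemma~\ref{coeff}, namely that $i_{\min}+l$ must be even whenever $\dim H^{i_{\min}+l}(L)\neq 0$, forces $(-1)^{l+i_{\min}}=+1$ at every position where the Lee term is nonzero, so the two contributions reinforce rather than cancel. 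Pulling this back through $q=-\sqrt{t}$ gives the claimed alternation of signs of $c_m$ with the parity of $m$.

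As a direct consequence every nonzero term of $V_L(t)$ contributes to $V_L(-1)$ with the same sign, so
\[
\det(L)=|V_L(-1)|=\sum_{m}|c_m|\geq\#\{m:c_m\neq 0\},
\]
where the last inequality uses that the coefficients are integers.

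Next I would bound the number of nonzero coefficients using Corollary~\ref{basicnew}: every gap in $V_L(t)$ has length one, so no two consecutive candidate exponents can both have zero coefficient. Setting $B=\breadth V_L(t)$, the Jones polynomial has $B+1$ integer-spaced candidate exponents, the two extremal ones are nonzero by the definition of breadth, and the ``no two adjacent zeros'' constraint on the $B-1$ interior positions forces at least $\lfloor(B-1)/2\rfloor$ of them to be nonzero. Adding the two extremal ones gives
\[
\#\{m:c_m\neq 0\}\geq\lfloor(B-1)/2\rfloor+2=\left\lceil\tfrac{B}{2}\right\rceil+1,
\]
and combining with the previous inequality yields the desired bound.

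The heart of the argument is the sign-reinforcement step: without knowing that the Lee contribution in Lemma~\ref{coeff} never cancels the $a$-term, one would only obtain $|V_L(-1)|\leq\sum|c_m|$ and lose the lower bound entirely. The remaining ingredients---translating through the substitution $q=-\sqrt{t}$ and counting coefficients under the gap constraint---are then elementary.
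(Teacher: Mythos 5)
Your proof is correct and follows essentially the same approach as the paper: combine Corollary \ref{basicnew} (gaps of length at most one) with the alternating sign pattern of the coefficients, then count nonzero monomials and use that each contributes at least one to $|V_L(-1)|=\det(L)$. The only difference is that you unwind the alternation from Lemma \ref{coeff} in detail, whereas the paper simply invokes the alternating property of the Jones polynomial of a quasi-alternating link as a known fact.
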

\begin{proof}
The result follows from the fact that any gap in  the Jones polynomial of the link has
length one. This implies that the Jones polynomial has at least
$\left(\left\lceil {\frac{\breadth V_{L}(t)}{2}} \right\rceil+ 1\right)$ monomials  each of which  contributes an increment of at least one to the  value of the determinant
due to the fact that the Jones polynomial of a quasi-alternating  link is alternating.
\end{proof}

\begin{coro}
Let $L$ be  a quasi-alternating link. Then,
the length of any gap in the quantum grading of $\mathcal{H}(L)$ is one.
\end{coro}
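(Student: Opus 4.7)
My plan is to deduce the statement from Theorem \ref{main} (all differential gaps have length one) by converting a quantum gap into a differential gap via $H$-thinness, and then to rule out the one surviving case using the structural observation recorded in the proof of Corollary \ref{breadth1}.

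First I would argue by contradiction. Suppose the quantum grading of $\mathcal{H}(L)$ has a gap of length $s \geq 2$ at consecutive empty rows $j_{0}, j_{0}+2, \ldots, j_{0}+2s-2$, with rows $j_{0}-2$ and $j_{0}+2s$ both nonzero. Since $L$ is quasi-alternating, Remark \ref{thin} ensures that $\mathcal{H}(L)$ is supported on the two diagonals $j-2i=-\sigma(L)\pm 1$. Set $i_{0}:=(j_{0}+\sigma(L)+1)/2$. For each $k$ with $0 \leq k \leq s-2$, the column $i_{0}+k$ has its lower-diagonal entry at row $j_{0}+2k$ and its upper-diagonal entry at row $j_{0}+2k+2$, and both of these rows lie in the quantum gap. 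Hence the $s-1$ consecutive columns $i_{0}, i_{0}+1, \ldots, i_{0}+s-2$ are empty, producing a gap of length at least $s-1$ in the differential grading.

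By Theorem \ref{main} this gap must have length at most one, which forces $s-1 \leq 1$ and hence $s=2$. In that remaining case the single empty column is $i_{0}$, and the column $i_{0}+1$ has to be nonzero; otherwise the differential gap would have length at least two, again contradicting Theorem \ref{main}. The lower-diagonal entry of column $i_{0}+1$ sits at row $j_{0}+2$, which is still inside the quantum gap and therefore zero. Consequently the only nonzero entry in column $i_{0}+1$ is on the upper diagonal, at row $j_{0}+4$.

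The key step, and the one I expect to be the real obstacle, is now to contradict this configuration. This is exactly what the structural claim inside the proof of Corollary \ref{breadth1} supplies: the leftmost-bottom nonzero entry immediately to the right of any differential gap (or at the beginning of $\mathcal{H}(L)$) must lie on the lower diagonal. Applying this claim to the length-one differential gap at column $i_{0}$ and its successor $i_{0}+1$ yields the required contradiction. Since that structural claim is proved in full generality using only the bi-grading of Lee homology established in Theorem \ref{grading}, no extra work is needed to justify its invocation here, and we conclude that $s \leq 1$.
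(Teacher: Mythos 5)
Your proof is correct, but it follows a genuinely different route from the paper's. The paper assumes two consecutive empty rows $j$ and $j+2$, locates the column $i$ with $j-2i=-\sigma(L)-1$, observes that all four $\mathcal{H}$-entries in those two rows vanish, and then uses the knight-move pairing (together with the vanishing of the corresponding Lee classes) to conclude that $\mathcal{H}^{i-1,j-2}(L)$ and $\mathcal{H}^{i+1,j+4}(L)$ also vanish. This produces three consecutive empty columns $i-1,i,i+1$, and a boundary check places them strictly inside $[i_{\min}+1,i_{\max}-1]$, contradicting Theorem \ref{main} at once. You instead convert the quantum gap of length $s$ into only $s-1$ empty columns, invoke Theorem \ref{main} to force $s=2$, and then dispose of the residual case by appealing to the structural claim from Corollary \ref{breadth1}'s proof (that the first nonzero entry after any gap must sit on the lower diagonal). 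Both routes ultimately rest on the same mechanism — the Lee spectral sequence and the bi-graded Lee homology from Theorem \ref{grading} — but the paper applies it directly once, while you apply Theorem \ref{main} once and then route the remaining case through Corollary \ref{breadth1}'s intermediate claim. One small point you leave implicit and should state: for the structural claim to apply, column $i_{0}$ must actually be a gap, i.e.\ $\mathcal{H}^{i_{0}-1}(L)\neq 0$; this holds because if $i_{0}-1$ were also empty you would again have a differential gap of length at least two (or contradict the definition of $i_{\min}$). With that remark added, your argument is complete and valid, though slightly longer than the paper's.
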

\begin{proof}
Suppose there are two consecutive rows that correspond to horizontal lines of $y$-intercepts $j$ and $j+2$ of $\mathcal{H}(L)$
with zero entries such that $j_{\min} + 2 \leq  j < j+2 \leq j_{\max}-2$. In this case, this implies that
$\mathcal{H}^{i}(L) = 0$ for some $i$ with $j-2i = -\sigma(L) -1$. From the assumption,
we have $\mathcal{H}^{i,j}(L) \cong \mathcal{H}^{i-1,j}(L) \cong \mathcal{H}^{i,j+2}(L) \cong
\mathcal{H}^{i+1,j+2}(L) = 0$. As a result of the knight move, we obtain
$\mathcal{H}^{i-1,j-2}(L) \cong \mathcal{H}^{i,j+2}(L) =0 $ and $ \mathcal{H}^{i+1,j+4}(L)
\cong \mathcal{H}^{i,j}(L)   = 0$. This implies that
$\mathcal{H}^{i-1}(L) = \mathcal{H}^{i}(L) = \mathcal{H}^{i+1}(L) = 0$. Also,
as a result of the knight move we obtain $j_{\min} + 4\leq j  \leq j_{\max} - 4$.
Therefore, we conclude that $i_{\min}+2\leq i \leq i_{\max} -2$.  This contradicts
the fact that the length of any gap in the differential grading of $\mathcal{H}(L)$ is one.
\end{proof}

As another  application, we obtain a necessary condition for a Kanenobu knot  to be  quasi-alternating.

\begin{coro}
If $|p+q|> 6 $, then the Kanenobu knot $K(p,q)$ is not quasi-alternating.
\end{coro}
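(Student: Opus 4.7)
The strategy is to reduce to Corollary \ref{basicnew}, which asserts that the Jones polynomial of any quasi-alternating link has only gaps of length one. Accordingly, it suffices to show that whenever $|p+q|>6$ the Jones polynomial $V_{K(p,q)}(t)$ contains at least two consecutive zero coefficients sandwiched between nonzero ones, i.e.\ a gap of length at least two.

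The first step is to write down $V_{K(p,q)}(t)$ in closed form. The Kanenobu knot $K(p,q)$ is built by inserting $p$ and $q$ half-twists into two specified tangle slots of a fixed diagram; resolving these half-twists by repeated application of the Kauffman bracket skein relation expresses $V_{K(p,q)}(t)$ as an explicit $\mathbb{Z}[t^{\pm 1/2}]$-linear combination of $V_{K(0,0)}$, $V_{K(1,0)}$, $V_{K(0,1)}$ and a small number of closures of the underlying tangles. A well-known feature of this construction, and the one I would verify first, is that $V_{K(p,q)}(t)$ depends only on the sum $p+q$; call the resulting polynomial $J_n(t)$ where $n=p+q$. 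This can be checked directly from the skein expansion, or alternatively by exhibiting a mutation that interchanges the two twist regions.

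Next I would analyze the support of $J_n(t)$ as a function of $n$. Twisting contributes monomials whose exponents spread out by a fixed positive amount per twist, so for $|n|$ large the top (or bottom) block of $J_n(t)$ is a rigid shift of a fixed polynomial, separated from the rest of $J_n$ by a zero band whose length grows linearly in $|n|$. Concretely, I would compute $J_n(t)$ for small values of $n$ (say $-6\le n\le 6$) and observe that exactly at $|n|=6$ the first gap of length two appears; for $|n|>6$ this gap persists and in fact lengthens, giving a gap of length at least two.

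Finally, the conclusion is immediate: by Corollary \ref{basicnew}, a quasi-alternating link cannot possess a gap of length greater than one in its Jones polynomial, so $K(p,q)$ is not quasi-alternating whenever $|p+q|>6$. The main obstacle in this plan is the bookkeeping in step one: the full Kauffman-bracket expansion of a two-parameter twist family has many terms, and one has to be careful to track which tangle closures produce which monomials in order to pinpoint the exact threshold $|n|=6$ at which a length-two gap first appears. Once that explicit formula is in hand, the gap analysis and the appeal to Corollary \ref{basicnew} are routine.
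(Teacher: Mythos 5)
Your route is valid but differs from the one the paper actually uses for this corollary. The paper's proof is a one-line citation: it invokes \cite[Theorem 1.3]{QM}, which shows that for $|p+q|>6$ the \emph{differential grading} of $\mathcal{H}(K(p,q))$ already has a gap of length bigger than one, and then applies Theorem~\ref{main} directly. Your proposal instead goes through the Jones polynomial and Corollary~\ref{basicnew}, which is precisely the alternative that the authors sketch in the remark immediately following the corollary, where they quote Kanenobu's closed formula
$V_{K(p,q)}(t)=(-t)^{p+q}\bigl((t^{-2}-t^{-1}+1-t+t^2)^2-1\bigr)+1$
from \cite{K}. The tradeoff is this: the paper's proof offloads the work to a prior Khovanov-homology computation in \cite{QM}, whereas your argument is self-contained once the Jones polynomial formula is in hand, since the support of $(-t)^{p+q}(f^2-1)$ is a contiguous block of nine exponents centered at $p+q$, and adding $1$ inserts an isolated constant term that creates a zero band of length $|p+q|-5$ in between.

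Two small points on your write-up. First, your threshold statement is off by one: at $|p+q|=6$ the block sits at degrees $2,\dots,10$ (for $p+q=6$), so the gap between the constant term and the block has length exactly one; a gap of length two first appears at $|p+q|=7$, i.e.\ precisely when $|p+q|>6$, which is what the corollary asserts. Second, rather than re-deriving the dependence of $V_{K(p,q)}$ on $p+q$ via a two-parameter Kauffman-bracket expansion, it is cleaner to simply cite Kanenobu's formula from \cite{K} as the paper does; the formula makes the gap analysis a one-step inspection of the degree shift $(-t)^{p+q}$ against the fixed symmetric factor $f^2-1$.
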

\begin{proof}

Any Kanenobu knots $K(p,q)$ with $|p+q|> 6 $ has gap in the differential grading of $\mathcal{H}(K(p,q))$
of length bigger than one as it was proved in \cite[Theorem\,1.3]{QM}.
\end{proof}
\begin{rem}
The above result can be also obtained in terms of $V_{L}(t)$. In particular,
it is proven in \cite{K} that $V_{K(p, q)}(t)  = (-t)^{p + q} (V_{K(0, 0)}(t) - 1)
+ 1 = (-t)^{p + q} ((t^{-2} - t^{-1} + 1 - t + t^2)^2 - 1) + 1$. It is easy to see that
if $|p+q|> 6$, then the above polynomial has a gap of length bigger than one. Thus
according to Corollary \ref{basicnew}, the knot $K(p,q)$ is not quasi-alternating. It is
known though that all these knots but finitely many of them are not quasi-alternating as
a result of Corollary 3.3 in \cite{QC}.
\end{rem}

Now we investigate the implications of the main result to the class of alternating
links being considered as a  special class of quasi-alternating links. We know that any prime  alternating
link that is not a $(2,n)$-torus link has no gap in its Jones polynomial as a result of
\cite[Theorem\,1(iv)]{Th}. In addition, it is known that all alternating links are $H$-thin  \cite[Theorem\,1.2]{L}. If we combine all these results, we obtain
\begin{coro}
If $L$ is a prime  alternating link that is not a $(2,n)$-torus link, then it has no gap in
the differential grading of $\mathcal{H}(L)$.
\end{coro}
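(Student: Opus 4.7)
The plan is to reduce this directly to Thistlethwaite's classical result on Jones polynomials of prime alternating links via the equivalence between gaps in the differential grading of $\mathcal{H}(L)$ and gaps in $V_L(t)$ established earlier in the paper (Proposition \ref{breadth2case1}).

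First, since every alternating link is quasi-alternating, the hypothesis places $L$ in the class $\mathcal{Q}$. Thus Proposition \ref{breadth2case1} applies and tells us that the differential grading of $\mathcal{H}(L)$ has a gap of length $s$ if and only if the Jones polynomial $V_L(t)$ has a gap of length $s$. This bi-conditional is the crux: it converts a homological statement into a statement about the Jones polynomial where Thistlethwaite's theorem is directly available.

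Next, I invoke \cite[Theorem\,1(iv)]{Th}, which guarantees that the Jones polynomial of a prime alternating link different from a $(2,n)$-torus link has no gaps at all. Combining this with the equivalence of Proposition \ref{breadth2case1}, no gap in $V_L(t)$ forces no gap in the differential grading of $\mathcal{H}(L)$, which is the desired conclusion.

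There is essentially no obstacle here beyond verifying that the hypotheses of Proposition \ref{breadth2case1} are met (quasi-alternating) and that Thistlethwaite's theorem applies (prime, alternating, not a $(2,n)$-torus link). Both are immediate from the assumptions on $L$. The $H$-thinness of alternating links, while stated in the excerpt, is not strictly needed for this corollary since it is already built into the machinery underlying Proposition \ref{breadth2case1}. Hence the proof is a one-line combination of two earlier results.
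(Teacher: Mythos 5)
Your proposal is correct and follows essentially the same route as the paper: both reduce the claim to Thistlethwaite's theorem via Proposition \ref{breadth2case1}, using the fact that alternating links are quasi-alternating. The only cosmetic difference is that the paper phrases it as a contradiction, whereas you argue directly.
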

\begin{proof}
If we assume that the differential grading of $\mathcal{H}(L)$ has a gap, then
Proposition  \ref{breadth2case1} implies   that the Jones polynomial
has a gap. This  contradicts \cite[Theorem\,1(iv)]{Th}.
\end{proof}
\begin{rem}
It is easy to see that if $L$ is a $(2,n)$-torus link, then it has only one gap in the
differential grading of $\mathcal{H}(L)$ according to the computations in \cite{Kh1}.
\end{rem}

\begin{coro}\label{breadth}
For any link $L$, we have $\breadth(V_{L}(t)) \leq \breadth(V_{L_{0}}(t)) + \breadth(V_{L_{1}}(t)) + 2$. In the case if $L$ is connected and alternating, then we obtain $c(L) \leq c(L_{0}) + c(L_{1})+ 2$, where $L_{0}$ and $L_{1}$ are the links obtained by smoothing any crossing of any reduced connected alternating diagram of the link $L$.
\end{coro}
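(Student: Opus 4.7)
The plan is to derive the first inequality directly from the Kauffman bracket skein relation $\langle L \rangle = A\langle L_0 \rangle + A^{-1}\langle L_1 \rangle$ and then convert the resulting bound to the Jones polynomial. The second inequality will then follow immediately by invoking the classical theorem of Kauffman--Murasugi--Thistlethwaite.

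Setting $P_0 = A\langle L_0 \rangle$ and $P_1 = A^{-1}\langle L_1 \rangle$, the identity $\langle L \rangle = P_0 + P_1$ yields
\[
\breadth(\langle L\rangle) \leq \max\bigl(\maxdeg(P_0),\, \maxdeg(P_1)\bigr) - \min\bigl(\mindeg(P_0),\, \mindeg(P_1)\bigr),
\]
i.e., $\breadth(\langle L \rangle)$ is at most the length of the smallest interval containing the supports of $P_0$ and $P_1$. I would then split into two cases. If the two supports overlap, the right-hand side is at most $\breadth(P_0) + \breadth(P_1) = \breadth(\langle L_0 \rangle) + \breadth(\langle L_1\rangle)$, and there is nothing more to do. If instead the supports are disjoint, there is a gap of some length $s$ between $P_0$ and $P_1$ in the sense of the paper's definition, and the covering interval then has length exactly $\breadth(\langle L_0 \rangle) + \breadth(\langle L_1 \rangle) + s + 1$.

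The key input at this stage is the Corollary immediately preceding the current statement (which combines Corollary \ref{resultofbasic1} and Proposition \ref{new}), giving $s \leq 7$. This produces $\breadth(\langle L \rangle) \leq \breadth(\langle L_0 \rangle) + \breadth(\langle L_1 \rangle) + 8$. Since $V_L$ is obtained from $\langle L \rangle$ by multiplication by a monomial in $A$ and the substitution $A^{-2} = t^{1/2}$, which rescales all $A$-degrees by a factor of $4$, dividing the preceding bound by $4$ delivers the first claim $\breadth(V_L) \leq \breadth(V_{L_0}) + \breadth(V_{L_1}) + 2$.

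For the second claim, I would invoke the fact that a reduced connected alternating diagram of $L$ realizes the crossing number, with $c(L) = \breadth(V_L)$, while for any link $K$ the Kauffman bound gives $\breadth(V_K) \leq c(K)$. Applying this to $L_0$ and $L_1$ and combining with the first claim,
\[
c(L) = \breadth(V_L) \leq \breadth(V_{L_0}) + \breadth(V_{L_1}) + 2 \leq c(L_0) + c(L_1) + 2.
\]
The main obstacle is the no-overlap case of the first step, where the bound $s \leq 7$ is used in an essential way; once this input is available, the remainder of the argument is a routine degree comparison together with a direct appeal to the standard results on alternating links.
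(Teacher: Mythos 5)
Your argument is correct and matches the paper's own proof: both derive $\breadth(\langle L\rangle)\leq\breadth(\langle L_0\rangle)+\breadth(\langle L_1\rangle)+8$ from the bound of seven on the gap between $A\langle L_0\rangle$ and $A^{-1}\langle L_1\rangle$, then rescale by four to obtain the Jones-polynomial inequality and apply Kauffman's theorem on connected reduced alternating diagrams. The only cosmetic difference is that for $L_0$ and $L_1$ you invoke the general span bound $\breadth(V_K)\leq c(K)$, whereas the paper invokes the equality $\breadth(V_{L_i})=c(L_i)$ coming from $L_0,L_1$ being connected alternating; both are valid, and your version is, if anything, slightly more robust since the smoothed diagrams need not be reduced.
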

\begin{proof}
If we combine the results of Corollary \ref{resultofbasic1} and Proposition \ref{new}, we obtain $ \breadth(\langle L \rangle ) \leq \breadth(\langle L_{0}\rangle) + \breadth(\langle L_{1}\rangle) + 8$.
Now the first result follows as a consequence of the fact that $\breadth (V_{L}(t)) = \frac{\breadth(\langle L\rangle)}{4}$. For the second result, we assume that $L$ is a reduced connected alternating diagram of the link $L$. In this case, we obtain $c(L) = \breadth(V_{L}(t)) \leq \breadth(V_{L_{0}}(t)) + \breadth(V_{L_{1}}(t)) + 2 = c(L_{0}) + c(L_{1}) +2 $ where the equalities follow as a consequence of \cite[Theorem\,2.10]{Ka} that implies $ \breadth(V_{L}(t)) = c(L)$ for any connected reduced alternating link diagram and the fact that the diagrams $L_{0}$ and $L_{1}$ are connected and alternating if $L$ is a reduced connected alternating diagram of given link. %first inequality follows by the first result and the second result follows from the fact that $\breadth(V_{L}(t)) \leq c(L)$ for any link $L$.
\end{proof}

Many questions arise naturally about the inequality in Corollary \ref{breadth}. We would like just to post the following two questions:
\begin{ques}
For what class of links does the inequality in Corollary \ref{breadth} hold?
\end{ques}
\begin{ques}
Is there a link $L$ for which the inequality in Corollary \ref{breadth} does not hold?
\end{ques}

%Moreover, we can ask another question that is  to provide an example of a link where the
An easy technique to produce new examples of quasi-alternating links from old ones was
introduced by Champanerkar and Kofman \cite[Page\,2452]{CK}. It basically replaces the
crossing $c$ where the link $L$ is quasi-alternating by an alternating rational tangle
of the same type. This technique has been later generalized not only to a single rational
tangle but also to a product of rational tangles  \cite[Definition\,2.5]{QCQ} and to non-algebraic alternating tangles \cite{CKa}.  The following proposition shows  how this twisting technique
affects the gaps in the differential grading.
\begin{prop}\label{twisting}
Let $L$ be a quasi-alternating link at some crossing $c$ and let $L^{*}$ be the link
obtained by replacing the crossing $c$ in $L$ by a product of rational tangles that
extends $c$. Then the number of gaps in the differential grading of $\mathcal{H}(L^{*})$ is less
than or equal the number of gaps in $\mathcal{H}(L)$. In particular, if there are no
gaps in differential grading of $\mathcal{H}(L)$, then it is also  the case for the differential grading
of $\mathcal{H}(L^{*})$.
\end{prop}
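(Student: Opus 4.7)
The plan is to use Theorem \ref{main} together with Proposition \ref{breadth2case1} to reduce the statement about Khovanov homology to a statement about the Jones polynomial. Since the replacement produces another quasi-alternating link $L^{*}$ (by the Champanerkar--Kofman construction \cite{CK} and its generalization in \cite[Definition\,2.5]{QCQ}), every gap in the differential grading of $\mathcal{H}(L)$ and of $\mathcal{H}(L^{*})$ has length one, and Proposition \ref{breadth2case1} gives a bijection between these gaps and the gaps in $V_{L}(t)$ and $V_{L^{*}}(t)$, respectively. Hence it suffices to prove that the number of gaps in $V_{L^{*}}(t)$ is at most the number of gaps in $V_{L}(t)$.

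Next, I would induct on the number of extra crossings in the tangle replacing $c$, so that the inductive step adds a single new crossing to an already-built intermediate quasi-alternating link $L^{(k)}$, producing $L^{(k+1)}$, with $L^{(0)} = L$ and $L^{(N)} = L^{*}$. The Champanerkar--Kofman construction arranges that $L^{(k+1)}$ is quasi-alternating at the new crossing, with the two smoothings being $L^{(k)}$ and one of $L_{0}, L_{1}$ (the smoothings of the original $L$ at $c$, possibly sitting inside a smaller product of rational tangles, which remains quasi-alternating by the same construction). Applying the appropriate case of Lemma \ref{jonespolynomial} at the new crossing expresses $V_{L^{(k+1)}}(t)$ as a monomial-weighted sum of $V_{L^{(k)}}(t)$ and $V_{L_{\varepsilon}}(t)$, with no cancellation between the two summands because $L^{(k+1)}$ is quasi-alternating (so $\det(L^{(k+1)}) = \det(L^{(k)}) + \det(L_{\varepsilon})$ and the Jones polynomials of quasi-alternating links have strictly alternating coefficients).

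It then remains to check that the newly added summand $V_{L_{\varepsilon}}(t)$, with its skein-induced shift, does not introduce gaps beyond those already present in the shifted support of $V_{L^{(k)}}(t)$. This is where I expect the main obstacle to lie: one has to track the exponent shifts coming from the writhe changes and from the parameter $e$ at each stage, and verify that the support of the shifted $V_{L_{\varepsilon}}$-summand lies inside the support range of the shifted $V_{L^{(k)}}$-summand, or abuts it in such a way that its addition can only fill in existing gaps of $V_{L^{(k)}}(t)$ and never create new ones. Once this alignment is established, the union-of-supports description combined with the induction hypothesis $\#\text{gaps}(V_{L^{(k)}}) \leq \#\text{gaps}(V_{L})$ delivers $\#\text{gaps}(V_{L^{(k+1)}}) \leq \#\text{gaps}(V_{L})$. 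The "in particular" statement then drops out of the same argument: if $V_{L}(t)$ has no gaps, its support is a single contiguous alternating range, and a union of two overlapping or abutting contiguous alternating ranges is again contiguous, so no gaps appear in $V_{L^{*}}(t)$ and hence none in the differential grading of $\mathcal{H}(L^{*})$.
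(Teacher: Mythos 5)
Your route mirrors the paper's in its overall shape: both reduce the Khovanov statement to the Jones/Kauffman polynomial via Proposition~\ref{breadth2case1}, both build the replacement tangle up from single crossings, and both invoke the absence of cancellation in the quasi-alternating skein sum. The chief mechanical difference is that the paper sidesteps the one-crossing-at-a-time Jones polynomial induction by writing a closed-form Kauffman bracket identity for the entire vertical (resp.\ horizontal) integer tangle of $n$ crossings at once, namely
$\langle L^{n}\rangle = A^{n-1}\langle L\rangle + \bigl(\sum_{i=1}^{n-1}(-1)^{i}A^{n-4i-2}\bigr)\langle L_{1}\rangle$,
and reads off the gap behavior from that single formula; the passage from integer tangles to products of rational tangles is then just an appeal to their iterated structure.

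The step you label as ``where I expect the main obstacle to lie'' --- verifying that the shifted $V_{L_\varepsilon}$ summand can only fill existing gaps of the shifted $V_{L^{(k)}}$ support, never create new ones --- is indeed the crux, and you do not close it (the paper, for its part, only asserts the analogous bracket statement with ``it is not too hard to see''). Your one-crossing induction also has an extra hole the paper's closed form avoids: to deduce $\#\text{gaps}(V_{L^{(k+1)}})\le\#\text{gaps}(V_{L})$ from a union-of-supports argument you must also bound the gap count of $V_{L_\varepsilon}$ with $L_\varepsilon\in\{L_0,L_1\}$, and that count is not a priori controlled by $\#\text{gaps}(V_{L})$. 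In the paper's identity this issue is absorbed because the $\langle L_{1}\rangle$-contributions of the second term sit in a chain of shifts by $4$ that abuts the $\langle L_{1}\rangle$-piece already inside $A^{n-1}\langle L\rangle$, so any gap of $\langle L^{n}\rangle$ is forced to come from a gap of $\langle L\rangle$. To make your sketch a proof, the cleanest fix is to switch to the explicit bracket identity and analyze the supports there directly.
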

\begin{proof}
For a positive integer $n$, we let $L^{n}$ denote the link with the crossing $c$
replaced by an alternating integer tangle of $n$ vertical or horizontal crossings
of the same type. To prove our result, we first show that the result holds for $L^{n}$.
By writing the Kauffman bracket skein relations, one can easily prove that:
\begin{align*}
\langle L^{n} \rangle & = A^{n-1}\left(A \langle L_{0} \rangle + A^{-1}\langle L_{1}
\rangle \right) + \left(\sum_{i=1}^{n-1}(-1)^{i}A^{n-4i-2}\right)\langle L_{1}
\rangle, {\mbox{ and} }\\ \langle L^{n} \rangle & = A^{-n+1}\left(A \langle L_{0} \rangle
+ A^{-1}\langle L_{1} \rangle \right) + \left(\sum_{i=1}^{n-1}(-1)^{i}A^{-n+4i+2}\right)
\langle L_{1} \rangle,
\end{align*}
for the case of vertical and horizontal tangles, respectively. Now, we discuss the first
case. The second case can be treated in a similar manner. It is not too hard to see
that any gap in $\langle L^{n}\rangle $ is induced basically by a gap in $\langle L
\rangle$. Finally the result follows since any product of rational tangles can be
obtained by a sequence of integer tangles and the result of Proposition  \ref{breadth2case1}.

\end{proof}

Quasi-alternating links of braid index 3 and quasi-alternating Montesinos links have been
classified in \cite{B, I}, respectively. Now we apply the above result to these
two classes of quasi-alternating links to obtain:

\begin{coro}
All quasi-alternating Montesinos links and quasi-alternating links of braid
index 3 have no gap in the differential grading of their Khovanov homologies.
\end{coro}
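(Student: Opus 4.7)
The plan is to invoke the classifications of quasi-alternating Montesinos links from \cite{I} and of quasi-alternating links of braid index three from \cite{B}, and then combine them with the twisting principle of Proposition \ref{twisting}. The key structural fact one extracts from both classifications is that every link in either family can be obtained from a small ``seed'' quasi-alternating link by iteratively applying the Champanerkar--Kofman construction (and its extension in \cite{QCQ}) at a quasi-alternating crossing, i.e.\ by replacing such a crossing with a product of alternating rational tangles of matching type. Thus it suffices to identify the possible seeds in each family and check that none of them produces a gap in the differential grading of the Khovanov homology.

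First I would handle the Montesinos case. From Issa's list in \cite{I} one reads off that every quasi-alternating Montesinos link arises from a quasi-alternating Montesinos link with a bounded number of tangles (in fact, of small crossing number) by the tangle replacements covered by Proposition \ref{twisting}. For each such seed, which lies on a short explicit list, one either cites the fact that it is a prime alternating link which is not a $(2,n)$-torus link (and hence has no gap in the differential grading, by the corollary following Thistlethwaite's theorem recorded earlier) or directly computes $\mathcal{H}$ and observes the absence of gaps. Applying Proposition \ref{twisting} finitely many times then propagates this ``no-gap'' property through the entire family.

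For quasi-alternating links of braid index three I would repeat the same strategy using Baldwin's classification \cite{B}. That classification describes quasi-alternating $3$-braids as iterated quasi-alternating extensions of a finite explicit list of base quasi-alternating $3$-braid closures; again each base case is either a prime alternating link that is not a $(2,n)$-torus link or is small enough that $\mathcal{H}$ can be verified directly to have no differential gap. One extra care is needed: the $(2,n)$-torus link does have a gap, so I must check that in both classifications $(2,n)$-torus links never arise as a seed that is then expanded (they only appear as degenerate endpoints), and if they appear at all as members of the family they must coincide with a base case whose homology can be computed by hand and shown not to have a gap (which is automatic since $(2,n)$-torus links are not quasi-alternating for $n$ large, and for small $n$ the homology is tabulated).

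The main obstacle will be the careful bookkeeping in the two classifications rather than any homological input: one must verify that at every step of the iterative construction, the crossing being replaced is indeed a crossing at which the current link is quasi-alternating, so that Proposition \ref{twisting} genuinely applies. Once this is established in each family, the conclusion is immediate: the number of gaps in the differential grading of $\mathcal{H}$ cannot increase under the tangle replacement, and the seeds carry no gaps, so neither do the links in the families. This completes the proof.
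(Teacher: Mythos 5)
Your proof takes a genuinely different route from the paper's. The paper's proof is essentially a one-liner: it cites Theorems~3.10 and~4.3 of \cite{CQ}, which state that the Jones polynomials of quasi-alternating Montesinos links and of quasi-alternating $3$-braid closures have no gaps, and then invokes the equivalence of Proposition~\ref{breadth2case1} (the reference ``Corollary \ref{twisting}'' in the paper's proof appears to be a typographical slip for that proposition) to transfer the no-gap statement from $V_L(t)$ to the differential grading of $\mathcal{H}(L)$. You instead propose to re-derive the result from the classifications in \cite{I} and \cite{B} by iterating Proposition~\ref{twisting} from a finite set of seeds. That is close in spirit to how the Jones-polynomial statements were established in \cite{CQ} in the first place, so it is not wrong in outline, but it is considerably more labor-intensive than what the paper does, and it requires you to verify nontrivial bookkeeping (that every link in each family genuinely arises from a seed by quasi-alternating tangle replacements) which the paper simply outsources to \cite{CQ}.

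There is also a concrete error in your handling of $(2,n)$-torus links. You write that they are ``not quasi-alternating for $n$ large,'' but that is false: every $(2,n)$-torus link is alternating and hence quasi-alternating, and the paper notes explicitly in a remark that each such link has a gap in the differential grading of its Khovanov homology. The reason these links do not falsify the corollary is purely a matter of scope: $(2,n)$-torus links are rational links (so are not Montesinos links under the usual convention of at least three rational tangles) and have braid index $2$, so they lie in neither of the two families under consideration. Your ``degenerate endpoint'' discussion should be replaced by this observation; as written it rests on a false premise.
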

\begin{proof}
The result follows as a result of Corollary \ref{twisting} and the fact that
the Jones polynomial of all quasi-alternating Montesinos links and quasi-alternating
links of braid index 3 have no gap (Theorem 3.10 and Theorem 4.3 in \cite{CQ}, respectively).
\end{proof}

Finally, we enclose our discussion with the following conjecture that is supported by the above results and
implies both Conjecture 2.3 in \cite{CQ} and Conjecture 3.8 in \cite{QC}. In particular, it implies that the
Jones polynomial of any prime quasi-alternating link that is not a $(2,n)$-torus link has
no gap and the breadth of the Jones polynomial of such a link is a lower bound of the
determinant of this link.
\begin{conj}\label{conjmain}
If $L$ is a prime quasi-alternating link that is not a $(2,n)$-torus link, then the differential grading
of $\mathcal{H}(L)$ has no gap.
\end{conj}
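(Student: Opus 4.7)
The plan is a strong induction on $\det(L)$. By Proposition \ref{breadth2case1} it suffices to prove that $V_L(t)$ has no gap, and by Theorem \ref{main} only gaps of length exactly one need be excluded. The base of the induction reduces to a direct check on the (finitely many) prime quasi-alternating links of small determinant that are not $(2,n)$-torus links. A useful complementary reduction is furnished by Proposition \ref{twisting}: since replacing a quasi-alternating crossing by a product of rational tangles only decreases the number of gaps, it suffices to prove the conjecture for the ``core'' prime quasi-alternating links that do not admit any nontrivial rational-tangle untwisting.

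For the inductive step, pick a crossing $c$ at which $L$ is quasi-alternating and let $L_0$ and $L_1$ be its smoothings, each of strictly smaller determinant. Because $L$ is quasi-alternating, its Jones polynomial is alternating, so no cancellation occurs between the two summands $-t^{1/2}V_{L_0}(t)$ and $-t^{3e/2+1}V_{L_1}(t)$ of Lemma \ref{jonespolynomial}; hence any gap in $V_L(t)$ must arise either from a gap inherited from $V_{L_0}(t)$ or $V_{L_1}(t)$ or from a gap between the supports of the two shifted polynomials. The argument then has two components. First, one must select $c$ so that each $L_i$ falls under one of three regimes: it is prime quasi-alternating and not a $(2,n)$-torus link (direct induction); it is a $(2,n)$-torus link, in which case its single Jones-polynomial gap must be shown explicitly to be covered by the other summand; or it is composite, in which case the multiplicativity of $V$ under connected sum reduces the question to prime quasi-alternating factors. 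Second, one must sharpen Corollary \ref{resultofbasic1} and Proposition \ref{new} under the hypotheses that $L$ is prime and non-$(2,n)$-torus, to conclude that the gap between $-t^{1/2}V_{L_0}(t)$ and $-t^{3e/2+1}V_{L_1}(t)$ is zero, i.e., that their supports overlap or are adjacent.

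The main obstacle is exactly this crossing-selection and overlap-control step. Producing a quasi-alternating crossing whose two smoothings behave uniformly under the induction, and ruling out the exceptional configurations in which a $(2,n)$-torus factor or a composite summand would force a persistent length-one gap, is the quasi-alternating analogue of the delicate combinatorial input Thistlethwaite used in \cite{Th} for the alternating case, and is exactly where the primality and non-$(2,n)$-torus hypotheses on $L$ must enter; without them the Jones polynomial of a $(2,n)$-torus link itself furnishes a counterexample. If this step can be carried out, the conclusion of the conjecture follows at once by combining Corollary \ref{basicnew} with Proposition \ref{breadth2case1}.
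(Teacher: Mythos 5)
The statement you were asked to prove is, in the paper, a \emph{conjecture}, not a theorem: the authors explicitly pose Conjecture~\ref{conjmain} as an open problem implying Conjecture~2.3 of \cite{CQ} and Conjecture~3.8 of \cite{QC}, and they give no proof. There is therefore nothing in the paper to compare your argument against; a complete, correct proof here would be new mathematics, not a rediscovery.

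Your proposal is not such a proof, and to your credit you say so. The preliminary reductions you invoke are real and are precisely the evidence the paper offers in support of the conjecture: Proposition~\ref{breadth2case1} converts the problem to gaps in $V_L(t)$, Theorem~\ref{main} shows only length-one gaps need be ruled out, Proposition~\ref{twisting} lets one untwist rational tangles, and the alternating-coefficient property of $V_L$ for quasi-alternating links blocks cancellation between the two summands of Lemma~\ref{jonespolynomial}. But the inductive step is hollow at exactly the points you flag. You do not show that a quasi-alternating crossing $c$ can always be chosen so that both smoothings fall into a regime the induction controls; the assertion that a $(2,n)$-torus smoothing's ``single Jones-polynomial gap must be shown explicitly to be covered by the other summand'' is a desideratum, not a demonstration, and the trefoil's Khovanov homology in Table~\ref{Khtrefoil} shows these gaps are genuine obstructions; the proposed ``sharpening'' of Corollary~\ref{resultofbasic1} and Proposition~\ref{new} to force the supports of $-t^{1/2}V_{L_0}(t)$ and $-t^{3e/2+1}V_{L_1}(t)$ to overlap or abut is stated as a goal with no mechanism supplied; and the base case rests on an unargued finiteness claim. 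These gaps are not technicalities --- taken together they \emph{are} the conjecture, which is exactly why the paper stops at stating it. If you want to test the strategy, a concrete next step is to carry out the overlap estimate for an explicit family built by the twisting of \cite{CK} or \cite{QCQ} and see whether primality and the non-torus hypothesis really supply the needed degree control; note, however, that the paper's final corollary already settles the Montesinos and braid-index-3 cases by quoting gap-freeness of their Jones polynomials from \cite{CQ}, so those families would not constitute a genuine test of your inductive mechanism.
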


\end{document}